\newtheorem{theorem}{Theorem}
\numberwithin{theorem}{section}
\newtheorem{proposition}[theorem]{Proposition}
\newtheorem{lemma}[theorem]{Lemma}
\newtheorem{corollary}[theorem]{Corollary}
\newtheorem{example}[theorem]{Example}
\newcommand{\Rr}{\mathbb{R}}
\newcommand{\x}{\textbf{x}}
\newcommand{\y}{\textbf{y}}
\newcommand{\dVol}{\operatorname{dVol}}
\newcommand{\Ric}{\operatorname{Ric}}
\newcommand{\Hess}{\operatorname{Hess}}
\title{Sharp $L^2$ estimates for the drift heat equation on shrinking Ricci solitons}
\author{Heather Macbeth}
\begin{document}

\begin{abstract}
  We prove an $L^2$ estimate for the drift heat equation on a complete gradient shrinking Ricci soliton.
This estimate has a time-dependent weight which is Gaussian in its spatial asymptotics.
When transferred and scaled to an estimate for the heat equation along the Ricci flow of the soliton,
this estimate is uniform up to the singular time.
\end{abstract}

\maketitle

\section{Introduction}\label{sec:intro}

\subsection{Setting}\label{sec:setup}

Let $(M,g,f)$ be a complete weighted manifold;
that is, a complete Riemannian manifold $(M,g)$
equipped with a ``weight'' function $f\in C^\infty(M, \mathbb{R})$.

Denote by $\Delta_f:=\Delta_g-\nabla^g f$  its \emph{drift Laplacian}.  
This partial differential operator is symmetric as an operator on $L^2(e^{-f}\dVol_g)$,
and it generates a semigroup $P_t:=e^{t\Delta_f}$ of operators on $L^2(e^{-f}\dVol_g)$.
For any $v_0 \in L^2(e^{-f}\dVol_g)$, the function $v(t, y):=(P_t(v_0))(y)$ is a solution to the \emph{drift heat equation}
\begin{equation}\label{drift-heat-equation}
  \frac{\partial v}{\partial t} = \Delta_fv
\end{equation}
with initial condition $v_0$.  See e.g.\ \cite[Theorem 3.1]{Gri06}.

A classical estimate
(which follows from a computation of the derivative of $\lVert P_tv\rVert_{L^2(e^{-f}\dVol_g)}^2$)
is that, for all $v\in L^2(e^{-f}\dVol_g)$ and all $t\geq 0$,
\begin{equation}\label{classical-bound}
\lVert P_tv\rVert_{L^2(e^{-f}\dVol_g)}\le \lVert v\rVert_{L^2(e^{-f}\dVol_g)}.
\end{equation}

The weighted manifold $(M,g,f)$ is a \emph{gradient shrinking Ricci soliton}, if there exists a positive real number $k$ such that $\Ric(g)+\Hess(f)=kg$.  
For simplicity we normalize throughout to the case of shrinking  solitons for which  $k=\tfrac{1}{2}$,
and also (see \cref{example-solutions-heat-eqn}) make a standard normalization for the soliton potential $f$
(eliminating the freedom to add a constant to $f$).  On such a soliton,
the potential function $f$ is of quadratic growth:
$\left\lvert f(y)-\frac{1}{4}r(y)^2\right\rvert \lesssim r(y)$,
where $r(y)$ denotes the distance from a fixed basepoint  \cite[Theorem 1.1]{CZ10}. 
Thus the measure $e^{-f}\dVol_g$ in the classical $L^2$ bound \cref{classical-bound} 
can be understood as the analogue of a Gaussian measure, relative to the Riemannian volume form $\dVol_g$.

\subsection{Pseudo-Gaussian $L^2$ bounds for the drift heat equation}\label{define-pseudo-gaussian}

The question studied in this article is to what extent $L^2$ bounds hold for the drift heat evolution operators $P_t$,
for other ``pseudo-Gaussian" measures on $(M, g, f)$.
By \emph{pseudo-Gaussian} we mean the measures
\[
e^{-f/\gamma}\dVol_g  
\]
on $M$, for positive real $\gamma$;
thus the classical result \cref{classical-bound} is a pseudo-Gaussian bound for $\gamma:=1$.
We emphasize that all these pseudo-Gaussian function spaces $L^2\left(e^{-f/\gamma}\dVol_g\right)$  contain functions with very fast
(e.g.\ superexponential) rates of spatial growth.

It turns out that a pseudo-Gaussian $L^2$ estimate can be obtained cheaply for any $\gamma<\tfrac{1+e^t}{2}$,
by combining the classical
Bakry-Emery hypercontractivity estimates \cite{BE85} with H\"older's inequality.
See \cref{bakry-emery-pseudo-gaussian}.

The main result of this article is a pseudo-Gaussian $L^2$ estimate in the case
$\gamma:=\tfrac{1+e^t}{2}$,
which \cref{bakry-emery-pseudo-gaussian} suggests to be the critical case.

\begin{theorem}\label{main-result}
  Let $v\in L^2(e^{-f}\dVol_g)$.
  Then for all $t\geq 0$,
  \[
    \lVert P_tv\rVert_{L^2\left(e^{-\frac{f}{\frac{1}{2}(e^t+1)}}\dVol_g\right)}
    \le
    e^{nt/4}\lVert v\rVert_{L^2(e^{-f}\dVol_g)}.
  \]
\end{theorem}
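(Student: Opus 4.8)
The plan is a one-variable Gr\"onwall argument, in the spirit of the computation behind \cref{classical-bound}. Abbreviate $\gamma = \gamma(t) := \tfrac12(e^t+1)$ and $d\mu_\gamma := e^{-f/\gamma}\dVol_g$, write $v = v(t,\cdot) := P_t v_0$, and set $Q(t) := \int_M v^2\, d\mu_{\gamma(t)}$. Since $\gamma(0) = 1$ we have $Q(0) = \lVert v_0\rVert_{L^2(e^{-f}\dVol_g)}^2$, so — squaring the desired inequality — it suffices to prove the differential inequality
\[
  Q'(t) \le \tfrac n2\, Q(t),
\]
after which Gr\"onwall's lemma gives $Q(t) \le e^{nt/2}Q(0)$. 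I will carry out the computation assuming $v_0 \in C_c^\infty(M)$, in which case $v$ is a bounded classical solution of \cref{drift-heat-equation}, with $\lVert v(t,\cdot)\rVert_\infty \le \lVert v_0\rVert_\infty$ by the maximum principle and $\lVert\nabla v(t,\cdot)\rVert_\infty < \infty$ by the Bakry--\'Emery gradient estimate $\lvert\nabla P_t u\rvert^2 \le e^{-t}P_t(\lvert\nabla u\rvert^2)$ (which applies here because the Bakry--\'Emery tensor $\Ric(g)+\Hess(f)$ equals $\tfrac12 g \ge 0$). The general case $v_0\in L^2(e^{-f}\dVol_g)$ then follows by density and Fatou's lemma, using that $P_t$ is an $L^2(e^{-f}\dVol_g)$-contraction by \cref{classical-bound}.

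Differentiating $Q$, using $\partial_t v = \Delta_f v$, and integrating by parts gives
\[
  Q'(t) = -2\int_M \lvert\nabla v\rvert^2\, d\mu_\gamma \;+\; \frac{\gamma-1}{\gamma}\int_M v^2\Bigl(\Delta f - \tfrac1\gamma\lvert\nabla f\rvert^2\Bigr)\, d\mu_\gamma \;+\; \frac{\gamma'}{\gamma^2}\int_M v^2 f\, d\mu_\gamma .
\]
Now feed in the soliton structure: the trace of $\Ric(g)+\Hess(f)=\tfrac12 g$ gives $\Delta f = \tfrac n2 - R$, and the standard soliton identity — with the normalization of $f$ from \cref{example-solutions-heat-eqn} — reads $\lvert\nabla f\rvert^2 = f - R$, where $R$ is the scalar curvature. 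Substituting $\Delta f = \tfrac n2 - R$ and using $f = \lvert\nabla f\rvert^2 + R$ in the last integral, and exploiting that for this specific $\gamma$ one has $\gamma'-\gamma+1 = \tfrac12$, the coefficients of $v^2$, $v^2\lvert\nabla f\rvert^2$ and $v^2 R$ collapse to
\[
  Q'(t) = -2\int_M \lvert\nabla v\rvert^2 d\mu_\gamma + \frac{n(\gamma-1)}{2\gamma}\int_M v^2\, d\mu_\gamma + \frac{1}{2\gamma^2}\int_M v^2\lvert\nabla f\rvert^2 d\mu_\gamma + \frac{1-2(\gamma-1)^2}{2\gamma^2}\int_M v^2 R\, d\mu_\gamma .
\]

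The only dangerous term is the third one, in which $\lvert\nabla f\rvert^2$ grows quadratically; the point is that it is absorbed \emph{exactly} by the dissipation term, via a completed square. Expanding
\[
  0 \le \int_M \Bigl\lvert \nabla v - \tfrac{1}{2\gamma}\, v\, \nabla f \Bigr\rvert^2 d\mu_\gamma ,
\]
integrating the cross term by parts (it equals $\tfrac1{2\gamma}\int_M v^2(\Delta f - \tfrac1\gamma\lvert\nabla f\rvert^2)\, d\mu_\gamma$), and using $\Delta f = \tfrac n2 - R$ once more, one obtains
\[
  \frac{1}{2\gamma^2}\int_M v^2\lvert\nabla f\rvert^2 d\mu_\gamma \;\le\; 2\int_M\lvert\nabla v\rvert^2 d\mu_\gamma + \frac{n}{2\gamma}\int_M v^2 d\mu_\gamma - \frac1\gamma\int_M v^2 R\, d\mu_\gamma .
\]
The constant $\tfrac1{2\gamma}$ in the square is the unique choice making the $\lvert\nabla v\rvert^2$-term come out to precisely $2$ — this is exactly where the criticality of $\gamma = \tfrac12(e^t+1)$ enters. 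Substituting this bound into the formula for $Q'$, the $\lvert\nabla v\rvert^2$-integrals cancel, the $v^2$-integrals combine to $\bigl(\tfrac{n(\gamma-1)}{2\gamma} + \tfrac{n}{2\gamma}\bigr)\int v^2 = \tfrac n2\int v^2$, and the coefficient of $\int_M v^2 R\, d\mu_\gamma$ becomes $\tfrac{1-2(\gamma-1)^2}{2\gamma^2} - \tfrac1\gamma = -\tfrac{2\gamma^2-2\gamma+1}{2\gamma^2} < 0$ for every $t$. Since the scalar curvature of a complete gradient shrinking Ricci soliton is nonnegative (B.-L.\ Chen), this last term drops, leaving $Q'(t)\le\tfrac n2 Q(t)$.

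I expect the main obstacle to be not the algebra above but the analytic justification — differentiating under the integral sign, and the two integrations by parts with no contribution from infinity — since $e^{-f/\gamma}$ decays only like $e^{-r^2/(4\gamma)}$, much more slowly than $e^{-f}$, and $L^2(e^{-f/\gamma}\dVol_g)$ contains rapidly growing functions. For $v_0 \in C_c^\infty(M)$ this is routine: exhaust $M$ by cutoffs $\chi_j$ ($\chi_j \equiv 1$ on $B_j$, $\lvert\nabla\chi_j\rvert \lesssim j^{-1}$), work with $Q_j(t) := \int_M v^2\chi_j^2\, d\mu_\gamma$, and observe that every error term produced by $\nabla\chi_j$ is bounded by $\lVert v\rVert_\infty\bigl(\lVert v\rVert_\infty + \lVert\nabla v\rVert_\infty\bigr)$ times an integral $\int_{B_{2j}\setminus B_j} P(r)\, d\mu_\gamma$ with $P$ a fixed polynomial (here $\lvert\nabla f\rvert = O(r)$); this tends to $0$ because $f$ grows quadratically and shrinkers have at most Euclidean volume growth, so $\mu_\gamma$ has finite moments of all orders. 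Passing to the limit validates the computations above. (Alternatively, the a priori finiteness of $Q(t)$ can be extracted from the sub-critical estimate of \cref{bakry-emery-pseudo-gaussian}.) The substance of the theorem is thus the exact cancellation of the quadratically growing terms at the critical weight $\gamma = \tfrac12(e^t+1)$, powered by the two soliton identities and by $R \ge 0$.
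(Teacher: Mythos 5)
Your proposal is correct and is essentially the paper's own argument: your completed square $\bigl\lvert\nabla v-\tfrac{1}{2\gamma}v\nabla f\bigr\rvert^2$ is exactly the paper's divergence identity (\cref{find-divergence-2}) specialized to $\alpha=1$, and the soliton identities, the sign of $R$, the justification of the integration by parts (cutoffs versus the paper's appeal to Yau's theorem on integrable divergences), and the final density/Fatou step all match the paper's \cref{use-divergence}--\cref{optimal-pseudo-gaussian}. The only organizational difference is that you commit to the critical $\gamma(t)=\tfrac12(e^t+1)$ from the outset, whereas the paper carries general $\alpha$, $\gamma$, $\mu$ through the computation and reads off the optimal choice at the end.
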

The proof is given in \cref{sec:critical}.
The key point is a derivative computation, generalizing that of \cref{classical-bound}.
We also give arguments showing two ways in which this result is optimal.

First, in \cref{example-solutions-heat-eqn},
we show that the constant $e^{nt/4}$ is optimal,
by giving explicit functions $v$ in the model case $M=\mathbb{R}^n$ for which 
\[
  e^{-nt/4}\lVert P_tv\rVert_{L^2\left(e^{-\frac{f}{\frac{1}{2}(e^t+1)}}\dVol_g\right)}  
\]
is arbitrarily close to $\lVert v\rVert_{L^2(e^{-f}\dVol_g)}$ as $t\to \infty$.

Second,
we carry out much of the argument for the proof of \cref{main-result} without specifying in advance which real $\gamma$ is to be taken for the pseudo-Gaussian bound.
It is apparent from the resulting ansatz that $\gamma:=\frac{1+e^t}{2}$ is optimal for this proof to go through.
See \cref{nonincreasing-sec}.

\subsection{Application to the heat equation along Ricci flow} \label{ricci-flow-setup}

We continue to work with a complete gradient shrinking Ricci soliton $(M, g, f)$, with standard normalizations.
Let $\psi_t$ be the one-parameter diffeomorphism associated to the vector field $\nabla f$.  
The one-parameter family of metrics
\[
g(\tau):=-\tau(\psi_{-\log(-\tau)})^*g,
\]
defined for $-\infty<\tau<0$, is a solution to the Ricci flow and has $g(- 1)=g$.

The importance of the drift Laplacian on a Ricci soliton is that it encapsulates statically the properties of the time-dependent heat equation along such a self-similar Ricci flow.
  Given a function $v(t,x)$ on $[0,\infty)\times M$, we introduce the function 
  \begin{equation} \label{ricci-heat-from-drift-heat}
  u(\tau, x):=v(-\log(-\tau), \psi_{-\log(-\tau)}(x))
  \end{equation}
  on $[-1,0)\times M$.
  Then $v(t,x)$ solves the drift heat equation \cref{drift-heat-equation}
  if and only if $u(\tau,x)$ solves the time-dependent heat equation along the Ricci flow $g(\tau)$,
  \begin{equation}
    \frac{\partial u}{\partial \tau}=\Delta_{g(\tau)}u.\label{heat-eqn-ricci-flow}
  \end{equation}

This relationship allows one to express any property of the drift  heat equation in terms of the heat equation along the Ricci flow, and vice versa.  
\cref{main-result} is interesting in this context because it scales correctly to give nontrivial behaviour along the Ricci flow.

\begin{corollary} \label{nonincreasing-along-flow}
  Let $u_{-1}\in L^2(M,e^{-f}\dVol_g)$.
  There exists a solution $u(\tau,x)$ on $[-1, 0)$ to the heat equation \cref{heat-eqn-ricci-flow} along the Ricci flow,
    with initial value $u|_{-1}\equiv u_{-1}$,
    and which satisfies, for all $\tau\in[0,1)$, that
\begin{align*}
\lVert u_{\tau}\rVert_{L^2\left(e^{-\frac{(-\tau)\psi_{-\log(-\tau)}{}^*f}{(1-\tau)/2}}\dVol_{g_\tau}\right)}
\le 
\lVert u_{-1}\rVert_{L^2\left(e^{-f}\dVol_g\right)}.
\end{align*}
\end{corollary}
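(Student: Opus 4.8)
The plan is to construct the required solution $u$ by applying the dictionary \cref{ricci-heat-from-drift-heat} to the drift-heat evolution of $u_{-1}$, and then to extract the stated inequality from \cref{main-result} via the change of variables relating $v$ and $u$, exploiting the fact that the volume scaling of the self-similar Ricci flow exactly cancels the constant $e^{nt/4}$.

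First I would set $v_0 := u_{-1} \in L^2(e^{-f}\dVol_g)$, let $v(t,x) := (P_t v_0)(x)$ be the solution of \cref{drift-heat-equation} with initial data $v_0$ furnished by the semigroup, and define $u$ by \cref{ricci-heat-from-drift-heat}, i.e.\ $u(\tau,x) := v(-\log(-\tau), \psi_{-\log(-\tau)}(x))$ for $\tau \in [-1,0)$. By the equivalence recalled just above the corollary, $u$ solves \cref{heat-eqn-ricci-flow}; and since $\psi_0 = \operatorname{id}$ and $-\log(-(-1)) = 0$, its value at $\tau = -1$ is $v(0,\cdot) = v_0 = u_{-1}$. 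So $u$ is a valid candidate, and it remains only to bound its norms.

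The computational heart is a change-of-variables identity. Fix $\tau \in [-1,0)$ and abbreviate $t := -\log(-\tau)$ (so $-\tau = e^{-t}$ and $1-\tau = 1+e^{-t}$) and $\psi := \psi_t$. Three elementary observations: (i) $g(\tau) = -\tau\,\psi^*g = e^{-t}\psi^*g$, hence $\dVol_{g(\tau)} = e^{-nt/2}\,\psi^*\dVol_g$; (ii) $\frac{-\tau}{(1-\tau)/2} = \frac{2e^{-t}}{1+e^{-t}} = \frac{1}{\frac12(e^t+1)}$, so the pseudo-Gaussian weight $e^{-(-\tau)\psi^*f/((1-\tau)/2)}$ equals $\psi^*\bigl(e^{-f/(\frac12(e^t+1))}\bigr)$; (iii) $u_\tau = v_t \circ \psi = \psi^* v_t$. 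Plugging these into the integral that defines the left-hand norm and invoking the diffeomorphism invariance of $\int_M$ gives
\[
\bigl\lVert u_\tau \bigr\rVert_{L^2\left(e^{-\frac{(-\tau)\psi^*f}{(1-\tau)/2}}\dVol_{g(\tau)}\right)}^2
= e^{-nt/2}\,\bigl\lVert v_t \bigr\rVert_{L^2\left(e^{-\frac{f}{\frac12(e^t+1)}}\dVol_g\right)}^2 .
\]

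Finally, \cref{main-result} applied at time $t$ to $v_0$ yields $\lVert v_t \rVert_{L^2(e^{-f/(\frac12(e^t+1))}\dVol_g)} = \lVert P_t v_0 \rVert \le e^{nt/4}\lVert v_0 \rVert_{L^2(e^{-f}\dVol_g)}$, and substituting this into the identity above the prefactor $e^{-nt/2}$ cancels $(e^{nt/4})^2 = e^{nt/2}$ exactly, leaving $\lVert u_\tau \rVert^2 \le \lVert u_{-1} \rVert_{L^2(e^{-f}\dVol_g)}^2$; taking square roots finishes the proof. I do not expect a genuine obstacle here: the argument is essentially bookkeeping, and the only point requiring care is to track the scaling in both the volume form and the exponent of the weight simultaneously, and to observe that the sharp constant $e^{nt/4}$ of \cref{main-result} is precisely what makes the cancellation exact — this is the sense in which \cref{main-result} ``scales correctly'' in the language of \cref{ricci-flow-setup}. (The displayed inequality should be read for $\tau \in [-1,0)$, the interval on which $g(\tau)$ and $u$ are defined, equivalently for $t = -\log(-\tau) \in [0,\infty)$.)
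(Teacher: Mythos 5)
Your argument is correct and is essentially identical to the paper's own proof: both construct $u$ from $v=P_tv_0$ via the dictionary \cref{ricci-heat-from-drift-heat}, pull the pseudo-Gaussian norm back through $\psi_t$ (picking up the factor $e^{-nt/2}$ from $\dVol_{g(\tau)}$ and matching the weights), and invoke \cref{main-result} so that $e^{-nt/2}(e^{nt/4})^2=1$. Your parenthetical remark that the inequality should be read for $\tau\in[-1,0)$ rather than $[0,1)$ also correctly identifies a typo in the statement.
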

The (short) proof is deferred to \cref{sec:transfer-bound}.

When the soliton $(M, g, f)$ is asymptotically conical with a sufficiently fast rate of convergence,
the rescaled metric and potential
\begin{align*}
  g(\tau)&=-\tau(\psi_{-\log(-\tau)})^*g,\\
  f(\tau)&:=(-\tau)\psi_{-\log(-\tau)}{}^*f
\end{align*}
converge locally uniformly on a dense open set 
to a Riemannian metric $G$ and function $F$ \cite[Section 4.3]{Sie13}, \cite[Section 3.1]{CDS19},
with the property that the metric space defined by $G$ is isometric to the asymptotic cone of $(M, g, f)$,
and the function $F$ is $\frac{1}{4}r^2$ in conical co-ordinates.

Thus the estimate \cref{nonincreasing-along-flow} is uniform up to the singular time $\tau=0$ of the Ricci flow,
where the limit measure is $e^{-2F}\dVol_G$.

We illustrate this in \cref{heat-operator-euclidean} by giving an explicit proof in the case of $\mathbb{R}^n$.
It is suggestive that, in that setting, estimates can be obtained on the whole interval $[-1,1)$;
that is, past the ``singular" time, $\tau=0$ (which of course, in the case of $\mathbb{R}^n$, does not exhibit a singularity).

\subsection{Acknowledgements}

I am grateful to Alix Deruelle for several pointers to the literature, and to Hans-Joachim Hein for many helpful discussions.
Part of this work was carried out during stays at the University of M\"unster;
I thank the mathematics department there for its hospitality.

\section{Pseudo-Gaussian bound from Bakry-Emery hypercontractivity estimates}\label{sec:bakry-emery}

In this  section we prove \cref{bakry-emery-pseudo-gaussian},
a cheap ``pseudo-Gaussian" (in the sense of \cref{define-pseudo-gaussian}) estimate
for the drift heat equation \cref{drift-heat-equation} which
stems from the hypercontractivity properties of the drift Laplacian semigroup.
This estimate serves as motivation for the stronger result of \cref{sec:critical}.

The result in ths section holds for a complete weighted manifold $(M,g,f)$ satisfying the one-sided Bakry-Emery bound 
\begin{equation}\label{bakry-emery-condition}
  \Ric(g)+\Hess(f)\geq \tfrac{1}{2}g.
\end{equation}
As in \cref{sec:setup},
we denote by $P_t:=e^{t\Delta_f}$ the heat evolution of the drift Laplacian $\Delta_f$.

\begin{theorem}[Bakry-Emery \cite{BE85}] \label{hypercontractive-two}
  For all $v\in L^2(e^{-f}\dVol_g)$, for all $t\in[0,\infty)$,   \[
    ||P_tv||_{L^{(1+e^t)/2}(e^{-f}\dVol_g)}\leq ||v||_{L^2(e^{-f}\dVol_g)}.
      \]
\end{theorem}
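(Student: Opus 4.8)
The plan is to prove this as an instance of the Bakry--Emery $\Gamma_2$-calculus: first establish a logarithmic Sobolev inequality for the measure $\mu:=e^{-f}\dVol_g$ out of the curvature bound \cref{bakry-emery-condition}, and then convert it into hypercontractivity by a derivative computation along the semigroup $P_t=e^{t\Delta_f}$. Here $\mu$ is the invariant measure of $P_t$, which after the additive normalization of $f$ is a probability measure and with respect to which $\Delta_f$ is self-adjoint and non-positive. Before any analysis I would record the routine reductions: using $|P_tv|\le P_t|v|$ together with monotone convergence along a truncation of $|v|$, it suffices to treat $v$ bounded and positive; then $P_tv$ is bounded, positive for $t>0$, and (as $\mu$ is finite) lies in every $L^p(\mu)$, so that for any smooth exponent profile $q(\cdot)$ the map $t\mapsto\lVert P_tv\rVert_{L^{q(t)}(\mu)}$ is finite, positive and differentiable, and the integrations by parts below are legitimate.

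The heart of the argument is the derivative computation generalizing the one behind \cref{classical-bound}. Write $u=P_tv$, $q=q(t)$, and $w=u^{q/2}$; differentiating, using the drift heat equation \cref{drift-heat-equation} and integration by parts, and rewriting $\int_M w^2\log w^2\,d\mu$ through the entropy $\operatorname{Ent}_\mu(w^2):=\int_M w^2\log w^2\,d\mu-\lVert w\rVert_{L^2(\mu)}^2\log\lVert w\rVert_{L^2(\mu)}^2$, one gets
\[
  \frac{d}{dt}\log\lVert u\rVert_{L^{q}(\mu)}
  =\frac{1}{q^{2}\,\lVert u\rVert_{L^{q}(\mu)}^{q}}
  \Bigl(q'(t)\,\operatorname{Ent}_\mu(w^{2})-4(q-1)\int_M|\nabla w|^{2}\,d\mu\Bigr).
\]
Thus $\lVert P_tv\rVert_{L^{q(t)}(\mu)}$ is non-increasing as soon as $q$ solves $q'=q-1$ with $q(0)=2$ --- that is, $q(t)=1+e^{t}$ --- provided $\mu$ obeys the logarithmic Sobolev inequality $\operatorname{Ent}_\mu(w^{2})\le 4\int_M|\nabla w|^{2}\,d\mu$. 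It is precisely the constant $\tfrac12$ in \cref{bakry-emery-condition}, entering through the sharp constant of this logarithmic Sobolev inequality, that pins down this choice of $q$. Combined with $P_0=\mathrm{Id}$ and \cref{classical-bound} at $t=0$, this yields $\lVert P_tv\rVert_{L^{1+e^{t}}(\mu)}\le\lVert v\rVert_{L^2(\mu)}$; since $\mu$ is a probability measure and $\tfrac12(1+e^{t})\le 1+e^{t}$, this contains the asserted estimate a fortiori (indeed with the sharper exponent $1+e^{t}$).

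It remains to derive the logarithmic Sobolev inequality from \cref{bakry-emery-condition}; this is the Bakry--Emery $\Gamma_2$-criterion \cite{BE85}. For $\Delta_f$ the associated bilinear form is $\Gamma(\phi)=|\nabla\phi|^{2}$, and its iterate, by the weighted Bochner identity, is $\Gamma_2(\phi)=|\Hess\phi|^{2}+(\Ric(g)+\Hess(f))(\nabla\phi,\nabla\phi)$, so \cref{bakry-emery-condition} is exactly the bound $\Gamma_2\ge\tfrac12\Gamma$. Differentiating the entropy $\Lambda(s)=\int_M (P_sg)\log(P_sg)\,d\mu$ and the Fisher information $I(s)=\int_M\Gamma(P_sg)/(P_sg)\,d\mu$ of a positive test function along the semigroup gives $\Lambda'=-I$ and $I'=-2\int_M (P_sg)\,\Gamma_2(\log P_sg)\,d\mu\le -2\cdot\tfrac12\,I=-I$; integrating from $0$ to $\infty$ and using that $\Gamma_2\ge\tfrac12\Gamma$ also forces ergodicity ($P_sg\to\int_M g\,d\mu$) one gets $\operatorname{Ent}_\mu(g)=\int_0^{\infty}I(s)\,ds\le I(0)=\int_M\Gamma(g)/g\,d\mu$, which is the claimed inequality after substituting $g=w^{2}$ and using $\Gamma(w^{2})=4w^{2}|\nabla w|^{2}$.

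The step I expect to be genuinely delicate is not any of the algebra but the analytic bookkeeping on a non-compact manifold: verifying that the boundary terms in every integration by parts vanish, that $P_tv$ and $P_sg$ really belong to the spaces in which the computations are performed, and that $\Lambda'$ and $I'$ may be computed under the integral sign. All of this rests on the \emph{strict} positivity in \cref{bakry-emery-condition}, which forces $\mu$ to be finite with Gaussian-type decay, makes $P_t$ conservative and ergodic, and supplies the decay needed for the cut-off arguments; concretely one would carry out the computations first on a dense class of smooth compactly supported functions and then pass to the limit, as in the standard treatment of hypercontractivity via the $\Gamma_2$-calculus.
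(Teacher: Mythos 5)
The paper does not prove \cref{hypercontractive-two}; it imports the result verbatim from \cite{BE85}, so there is no in-text argument to compare yours against. What you give is the standard proof --- Gross's differentiation of $t\mapsto\lVert P_tv\rVert_{L^{q(t)}}$ reducing hypercontractivity to a logarithmic Sobolev inequality, followed by the Bakry--Emery $\Gamma_2$-criterion deducing that inequality from \cref{bakry-emery-condition} --- and the computations are right: the derivative formula, the ODE $q'=q-1$ with $q(0)=2$ giving $q(t)=1+e^t$, and the constant $4$ in the log-Sobolev inequality coming from the curvature lower bound $\tfrac12$. Two caveats. First, the exponent: you prove the estimate in $L^{1+e^t}$ and then weaken to the stated $L^{(1+e^t)/2}$; note that the proof of \cref{bakry-emery-pseudo-gaussian} actually invokes the bound on $\lVert P_tv\rVert_{1+e^t}$, so the exponent in the statement of \cref{hypercontractive-two} is evidently a typo for $1+e^t$, and your stronger intermediate version is the one the paper needs. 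Second, and more substantively, both your final ``a fortiori'' step and the constant $1$ in the hypercontractive inequality itself require $e^{-f}\dVol_g$ to be a probability measure: for $q\ne 2$ the inequality $\lVert P_tv\rVert_{L^q(e^{-f}\dVol_g)}\le\lVert v\rVert_{L^2(e^{-f}\dVol_g)}$ is not invariant under $f\mapsto f+c$ (test $v\equiv1$). You assume this normalization explicitly, which is the honest thing to do, but it is not the normalization the paper later imposes via \cref{soliton-identities} (on the Gaussian soliton that normalization gives total mass $(4\pi)^{n/2}$), so as literally stated the theorem either carries this hidden hypothesis or holds only up to the factor $\left(\int_Me^{-f}\dVol_g\right)^{1/q-1/2}$ --- harmless for the paper, since \cref{bakry-emery-pseudo-gaussian} only needs boundedness with some constant, but worth flagging. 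The analytic justifications you defer (integration by parts, differentiation under the integral, ergodicity on a noncompact manifold) are standard under the strict positivity in \cref{bakry-emery-condition}, and leaving them at the level of a sketch is proportionate given that the paper treats the whole theorem as a citation.
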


Suppose further that 
for all $\epsilon>0$, 
\begin{equation}\label{finite-vol-condition}
  \int_Me^{-\epsilon f}\dVol_g<\infty.
\end{equation}
For $q$ and $\gamma$ with $0<\gamma<q/2$, define
\[
C_{q,\gamma}:=
 \left\{\int_M\left(e^{-f\left[\tfrac{1}{\gamma}-\tfrac{2}{q}\right]}\right)^{\tfrac{q}{q-2}}\dVol_g\right\}^{\tfrac{q-2}{2q}};
\]
this is finite by \cref{finite-vol-condition}.

\begin{proposition}\label{bakry-emery-pseudo-gaussian}
  Let $v\in L^2(e^{-f}\dVol_g)$.  Then for all $t\geq 0$, for any $\gamma$ satisfying $\gamma<\tfrac{1+e^t}{2}$,
\begin{equation*}
  \lVert P_tv\rVert_{L^2(e^{-f/\gamma}\dVol_g)}
  \le C_{1+e^t,\gamma} \lVert v\rVert_{L^2(e^{-f}\dVol_g)}.
\end{equation*}
\end{proposition}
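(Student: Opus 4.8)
The plan is to obtain this as an immediate consequence of \cref{hypercontractive-two} combined with a single application of H\"older's inequality --- the ``cheap'' estimate advertised in \cref{sec:intro}. Write $w:=P_tv$ and $q:=1+e^t$, and assume $t>0$, so that $q>2$ (at $t=0$ one has $q=2$ and the expression defining $C_{q,\gamma}$ is not defined, so that value is excluded). By \cref{hypercontractive-two}, the operator $P_t$ maps $L^2(e^{-f}\dVol_g)$ into $L^{q}(e^{-f}\dVol_g)$ with norm at most $1$, so
\[
\lVert w\rVert_{L^{q}(e^{-f}\dVol_g)}\le\lVert v\rVert_{L^2(e^{-f}\dVol_g)}.
\]

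It remains to control $\lVert w\rVert_{L^2(e^{-f/\gamma}\dVol_g)}$ by the left-hand side of this inequality. I would factor the target weight as
\[
e^{-f/\gamma}=e^{-2f/q}\cdot e^{-f\left(\frac1\gamma-\frac2q\right)},
\]
so that $|w|^2e^{-f/\gamma}=\bigl(|w|^2e^{-2f/q}\bigr)\cdot e^{-f\left(\frac1\gamma-\frac2q\right)}$, and apply H\"older's inequality to this product of nonnegative measurable functions with the conjugate exponents $\tfrac q2$ and $\tfrac q{q-2}$. Because $\bigl(|w|^2e^{-2f/q}\bigr)^{q/2}=|w|^qe^{-f}$, the first factor that appears is $\bigl(\int_M|w|^qe^{-f}\dVol_g\bigr)^{2/q}=\lVert w\rVert_{L^q(e^{-f}\dVol_g)}^2$, and the second factor is, directly from the definition of $C_{q,\gamma}$, equal to
\[
\left\{\int_M\bigl(e^{-f\left(\frac1\gamma-\frac2q\right)}\bigr)^{\frac q{q-2}}\dVol_g\right\}^{\frac{q-2}{q}}=C_{q,\gamma}^2.
\]
Chaining this with the hypercontractivity bound gives $\int_M|w|^2e^{-f/\gamma}\dVol_g\le C_{q,\gamma}^2\lVert v\rVert_{L^2(e^{-f}\dVol_g)}^2$, which is the assertion once one recalls $q=1+e^t$.

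I do not expect a genuine obstacle here; the whole content is to arrange the H\"older split so that the factor depending on $w$ is exactly the norm governed by \cref{hypercontractive-two} while the remaining factor is the fixed constant $C_{q,\gamma}$. The one point to verify carefully is that the hypothesis $\gamma<\tfrac{1+e^t}2$ supplies everything the argument uses: the dual exponent $\tfrac q{q-2}$ is legitimate precisely because $q>2$, and the constant $C_{q,\gamma}$ is finite precisely because $\bigl(\tfrac1\gamma-\tfrac2q\bigr)\tfrac q{q-2}>0$, that is, because $\gamma<\tfrac q2$, at which point finiteness follows from \cref{finite-vol-condition}. (The same bookkeeping makes visible that, for the $w$-dependent factor to reduce to a pure norm controlled by \cref{hypercontractive-two}, one is forced to split at the exponent $2/q$, which is what pins down the choice $q=1+e^t$ and hence the constant $C_{1+e^t,\gamma}$ in the statement.)
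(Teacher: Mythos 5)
Your proof is correct and follows essentially the same route as the paper: the same H\"older split of $e^{-f/\gamma}$ at the exponent $2/q$ with $q=1+e^t$, combined with \cref{hypercontractive-two} to control the $L^q(e^{-f}\dVol_g)$ factor and the definition of $C_{1+e^t,\gamma}$ for the remaining factor. Your remark about the degenerate case $t=0$ (where $q=2$ and $C_{q,\gamma}$ is undefined) is a fair observation that the paper glosses over.
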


The constants $C_{1+e^t,\gamma}$ are not controlled as $\gamma\to \tfrac{1+e^t}{2}$; see discussion after the proof.

\begin{proof}
  We will take $C := C_{1+e^t,\gamma}$.
Let $q:=1+e^{t}$.
Then, for all $0<\gamma<q/2=\tfrac{1+e^{t}}{2}$, 
by H\"older's inequality,
\begin{align*}
  \int_M(P_tv)^2e^{-f/\gamma}\dVol_g
  &=\int_M
  \left((P_tv)^qe^{-f}\right)^{\frac{2}{q}}
  \left(e^{-f\left[\tfrac{1}{\gamma}-\tfrac{2}{q}\right]}\right)\\
  &\leq 
  \left\{\int_M(P_tv)^qe^{-f}\dVol_g\right\}^{\frac{2}{q}}
  \left\{\int_M\left(e^{-f\left[\tfrac{1}{\gamma}-\tfrac{2}{q}\right]}\right)^{\tfrac{q}{q-2}}\dVol_g\right\}^{\frac{q-2}{q}}\\
  &= ||P_tv||_{1+e^{t}}^2\cdot C_{1+e^t,\gamma}^2\\
  &\le ||v||_{2}^2\cdot C_{1+e^t,\gamma}^2\\
  &= C_{1+e^t,\gamma}^2 \int_Mv^2e^{-f}\dVol_g.
\end{align*}
the second-last step by \cref{hypercontractive-two}.

\end{proof}

\begin{example}
  On $\mathbb{R}^n$, with $f(y)=\tfrac{1}{4}\lVert y\rVert^2$,
\begin{align*}
  C_{1+e^t,\gamma}^2
  &=
 \left\{\int\left(e^{-f\left[\tfrac{1}{\gamma}-\tfrac{2}{1+e^t}\right]}\right)^{\tfrac{1+e^t}{(1+e^t)-2}}\dVol_g\right\}^{\tfrac{(1+e^t)-2}{1+e^t}}\\
 &=
 \left\{\left(\tfrac{1}{4}\left[\tfrac{1}{\gamma}-\tfrac{2}{1+e^t}\right]\tfrac{1+e^t}{e^t-1}\right)^{-1}\pi\right\}^{\tfrac{e^t-1}{1+e^t}\tfrac{n}{2}}.
\end{align*}

Consider taking $\gamma(t)=\tfrac{1+e^t-\epsilon(e^t-1)}{2}$, for some small $\epsilon$. Then
\begin{align*}
  \frac{1}{\gamma(t)}-\frac{2}{1+e^t}
  &=2\left[\frac{1}{1+e^t-\epsilon(e^t-1)}-\frac{1}{1+e^t}\right]\\
  &=\frac{2\epsilon(e^t-1)}{(1+e^t)(1+e^t-\epsilon(e^t-1))}.
\end{align*}
So
\begin{align*}
  C_{1+e^t,\gamma(t)}^{4/n}
 &=
 \left\{\left(\tfrac{1}{4}\left[\tfrac{1}{\gamma}-\tfrac{2}{1+e^t}\right]\tfrac{1+e^t}{e^t-1}\right)^{-1}\pi\right\}^{\tfrac{e^t-1}{1+e^t}}\\
  &=
  \left\{\tfrac{2\pi(1+e^t-\epsilon(e^t-1))}{\epsilon}\right\}^{\tfrac{e^t-1}{1+e^t}}\\
  e^{-t}C_{1+e^t,\gamma(t)}^{4/n}
  &=
  \left\{\tfrac{2\pi(e^{-t}+1-\epsilon(1-e^{-t}))}{\epsilon}\right\}^{\tfrac{1-e^{-t}}{e^{-t}+1}}.
\end{align*}
This tends to $\frac{2\pi(1-\epsilon)}{\epsilon}$ as $t\to \infty$.

\end{example}

\section{Examples}\label{example-solutions-heat-eqn}

In this section we note some explicit solutions $v(t,y)$ to the drift heat equation \cref{drift-heat-equation},
either on a general shrinking soliton or on $\mathbb{R}^n$, defined for $t\in[0,\infty)$,
together with the associated solution $u(\tau,x)$ (see \cref{ricci-flow-setup}) to the heat equation  \cref{heat-eqn-ricci-flow} along the Ricci flow.
As well as providing some useful intuition on unbounded solutions to the heat equation,
this analysis demonstrates that \cref{main-result} is sharp.

As noted in \cref{sec:setup}, we work with shrinking solitons $(M, g, f)$
on which the soliton constant is normalized to $\frac{1}{2}$
and the potential function is normalized (by the addition of a constant) so that all three of the standard identities for the soliton potential \cite[(4.9), (4.13)]{CLN06} are valid:
\begin{equation}
  \left.
\begin{aligned}
  R_g+\Delta f &= \frac{n}{2}\\
R_g+\lvert df\rvert^2&=f\\
\Delta f - \lvert df\rvert^2&=\frac{n}{2}-f.
\end{aligned}
\right\rbrace\label{soliton-identities}
\end{equation}

  In the case of $\mathbb{R}^n$, with $f(y)=\tfrac{1}{4}\lVert y\rVert^2$,
  the drift heat equation is
  $\frac{dv}{dt}=\Delta v - \tfrac{1}{2}y\cdot \nabla v$
  and the heat equation along the Ricci flow is just the standard heat equation on $\mathbb{R}^n$.
  We also exploit that the translation between the  $v(t,y)$ and  $u(\tau,x)$ columns is more explicit than in \cref{ricci-flow-setup}:
\begin{align*}
  u(\tau,x)&=v(-\log(-\tau),(-\tau)^{-1/2}x)\\
  v(t,y)&=u(-e^{-t},e^{-t/2}y).
\end{align*}

Here, then, are some explicit solutions to the two equations.
\vspace{5mm}

\newcounter{rowcntr}
\renewcommand{\therowcntr}{\arabic{rowcntr}}

\begin{tabular}{l|l|l|l|l}
  & setting & $v(t,y)$ & $u(\tau,x)$ & spatial asymptotics \\
  \hline
  \hline
  \refstepcounter{rowcntr}\therowcntr \label{zero-eigenfunction} & general  & $1$ & $1$ & constant \\
  \hline
  \refstepcounter{rowcntr}\therowcntr \label{one-eigenfunction} & $\mathbb{R}^n$ & $e^{-t/2}c\cdot y$ & $c\cdot x$ & linear  \\
  \hline
  \refstepcounter{rowcntr}\therowcntr \label{two-eigenfunction-euclidean} & $\mathbb{R}^n$ 
  & $e^{-t}\left(\frac{1}{4}\lVert y\rVert^2-\frac{n}{2}\right)$ 
  & $\frac{1}{4}\lVert x\rVert^2+\frac{n\tau}{2}$ & quadratic \\
  \hline
  \refstepcounter{rowcntr}\therowcntr \label{two-eigenfunction} & general  
  & $e^{-t}\left(f-\frac{n}{2}\right)$ 
  & $-\tau f \circ \psi_{-\log(-\tau)}+\frac{n\tau}{2}$ & quadratic\\
  \hline
  \refstepcounter{rowcntr}\therowcntr \label{eigenfunction} 
  & general  & $e^{-\lambda t}e_\lambda $ 
  & $(-\tau)^\lambda e_\lambda \circ \psi_{-\log(-\tau)}$ & polynomial\footnote{conjectured} \\
  \hline
  \refstepcounter{rowcntr}\therowcntr \label{reverse-heat-kernel-function} 
  & $\mathbb{R}^n$ 
  & $\left(c+e^{-t}\right)^{-n/2}e^{\frac{\lVert y\rVert^2}{4(ce^t+1)}}$ 
  & $\left(c-\tau\right)^{-n/2}e^{\frac{\lVert x\rVert^2}{4(c-\tau)}}$
  & superexponential \\
  \hline
  \refstepcounter{rowcntr}\therowcntr \label{heat-kernel-function} & $\mathbb{R}^n$ 
  & $\left(c-e^{-t}\right)^{-n/2}e^{-\frac{\lVert y\rVert^2}{4(ce^{t}-1)}}$
  & $\left(c+\tau\right)^{-n/2}e^{-\frac{\lVert x\rVert^2}{4(c+\tau)}}$
  & Gaussian
\end{tabular}

\vspace{5mm}

In (\ref{eigenfunction}), $e_\lambda$ represents a $\lambda$-eigenfunction for the drift Laplacian $\Delta_f$. In (\ref{reverse-heat-kernel-function}), the constant $c$ should be nonnegative; in (\ref{heat-kernel-function}), the constant $c$ should be at least 1.

Note that 
\begin{itemize}
  \item (\ref{two-eigenfunction}) generalizes (\ref{two-eigenfunction-euclidean}); it follows from one of the soliton identities (\ref{soliton-identities});
  \item (\ref{eigenfunction}) generalizes (\ref{zero-eigenfunction}) (with $\lambda =0$), (\ref{one-eigenfunction}) (with $\lambda=\frac{1}{2}$), and (\ref{two-eigenfunction-euclidean})-(\ref{two-eigenfunction}) (with $\lambda=1$);
  \item all these functions $v(t,y)$ tend pointwise to constants as $t\to\infty$.
\end{itemize}

Now we look at the behavior of a few of these quantities of interest over time, for the case of $\mathbb{R}^n$ where they can be computed explicitly.
Note that Example \ref{reverse-heat-kernel-function} makes sense on the desired time span when the constant $c$ is taken to have $c>1$.

\vspace{5mm}

\begin{tabular}{l|l|l|l}
  & $v(t,y)$ & $\int v^2e^{-\frac{\lVert y\rVert^2}{4\gamma(t)}}dy$ & $e^{-nt/2}\int v^2e^{-\frac{\lVert y\rVert^2}{4(e^t+1)/2}}dy$ \\
  \hline
  \hline
  \ref{zero-eigenfunction} & $1$ 
  & $\left[4\pi\gamma(t)\right]^{n/2}$
  & $\left[2\pi(1+e^{-t})\right]^{n/2}$ \\
  \hline
  \ref{one-eigenfunction} & $e^{-t/2}c\cdot y$
  & $2^{n+1}\pi^{n/2}\lVert c\rVert^2e^{-t}\gamma(t)^{n/2+1}$
  & $(2\pi)^{n/2}\lVert c\rVert^2\left(1+e^{-t}\right)^{n/2+1}$ \\
  \hline
  \ref{reverse-heat-kernel-function} 
  & $\left(c+e^{-t}\right)^{-n/2}e^{\frac{\lVert y\rVert^2}{4(ce^t+1)}}$ 
  & $\left(\frac{2\pi e^t}{\left[\frac{ce^t+1}{2\gamma(t)}-1\right]\left[c+e^{-t}\right]}\right)^{n/2}$
  & $\left(\frac{2\pi}{c-1}\right)^{n/2}\left(\frac{1+e^{-t}}{c+e^{-t}}\right)^{n/2}$
\end{tabular}

\vspace{5mm}

We can consider these examples in relation to the bound on the operator norm of the evolution map from $t=0$ to large $t$ given by \cref{main-result}, namely that it is at most 1.
Each individual quantity
\[
L(v):=\sqrt{\frac{\lim_{t\to\infty}e^{-nt/2}\int v^2e^{-\frac{\lVert y\rVert^2}{4(e^t+1)/2}}dy}{\int v^2e^{-\frac{\lVert y\rVert^2}{4}}dy}}
\]
gives a lower bound for the asymptotic behaviour of these operator norms relative to $e^{nt/4}$ (the constant obtained in \cref{main-result}).

\vspace{5mm}

\begin{tabular}{l|l|l}
  & $v(t,y)$ & $L(v)$ \\
  \hline
  \hline
  \ref{zero-eigenfunction}
  & 1 & $\frac{1}{2^{n/4}}$ \\
  \hline
  \ref{one-eigenfunction} & $e^{-t/2}c\cdot y$ & $\frac{1}{2^{(n+2)/4}}$ \\
  \hline
  \ref{reverse-heat-kernel-function} 
  & $\left(c+e^{-t}\right)^{-n/2}e^{\frac{\lVert y\rVert^2}{4(ce^t+1)}}$ 
  & $\left(\frac{c+1}{2c}\right)^{n/4}$
  \\
\end{tabular}

\vspace{5mm}

The bound from Example \ref{reverse-heat-kernel-function} is most useful:
choosing $c$ arbitrarily close to 1 (subject to the constraint $c>1$) shows that the $L(v)$ can be made arbitrarily close to 1,
so \cref{main-result} is sharp on $\mathbb{R}^n$, at least as $t\to\infty$.
  
\section{Pseudo-Gaussian bound in the critical case}\label{sec:critical}

In this section we prove \cref{main-result}.
Almost all the work is done in the setting of initial data which is uniformly bounded to second order,
culminating in \cref{optimal-pseudo-gaussian},
the specialization of \cref{main-result} to this setting.
In the final part of the argument, we extend by approximation to general initial data.

The steps of the argument are:
\begin{enumerate}
\item a proof that such bounds on initial data extend to spatial bounds for all time
(\cref{subsec:spatial-bounds});\\
\item a calculation of the derivative of
\[
  \mu(t)^{n/2}\lVert P_tv\rVert_{L^2\left(e^{-f/\gamma(t)}\dVol_g\right)}^2
\]
for potentially time-dependent $\mu$ and $\gamma$
(which uses the bounds from \cref{subsec:spatial-bounds} to ensure the validity of an application of Stokes'
 theorem);
 \item the specialization to shrinking Ricci solitons and the main result (\cref{nonincreasing-sec}).
\end{enumerate}

\subsection{Estimates given bounded initial data} \label{subsec:spatial-bounds}

The main result of this subsection is \cref{uniform-bounds},
spatial bounds for all time.
This is probably well-known but we include an argument for completeness.

We recall the Karp-Li-type maximum principle for heat equations,
see \cite[Theorem 1.2]{NT04}, \cite[Theorems 7.39, 7.43]{CLN06} for general expositions.
We adapt a version stated by Li \cite[Theorem 5.9]{Li15} (taking $\alpha(r)\equiv 1$ in the statement there).

Let $(M,g, f)$ be a complete weighted manifold satisfying 
\begin{equation}
  \int_Me^{-f}\dVol_g<\infty.\label{true-finite-volume-condition}
\end{equation}

\begin{proposition}\label{maximum-principle}
  Let $v(y,t)$ be a smooth function on $M\times [0, T]$ such that
  \[
  \left(\Delta_f-\frac{d}{d t}\right)v\ge 0,
  \]
  (that is, $v$ is a subsolution to the drift heat equation), and at the initial time $v(\cdot,0)\le C$.
  If moreover $v$ satisfies the size condition
  \begin{align}
    \int_0^{T}\int_M v(y,s)^2 e^{-f(x)}\dVol_{g} ds &<\infty,\label{time-l2-bound}
  \end{align}
  then $v(\cdot ,t)\le C$  for all $t\in [0,T]$.
\end{proposition}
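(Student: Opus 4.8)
The plan is to apply the cited Karp–Li maximum principle essentially verbatim, after checking that the hypotheses we are given imply the hypotheses needed there. The cited statement (e.g.\ \cite[Theorem 5.9]{Li15} with $\alpha(r)\equiv 1$, or \cite[Theorem 1.2]{NT04}) applies to subsolutions of the drift heat equation on a complete weighted manifold and concludes the desired pointwise bound $v(\cdot,t)\le C$, provided that the initial bound holds and that an integrated ``size'' condition is satisfied. The size condition in those references is typically of the form
\[
\int_0^T \int_{B(r)} \big(v^+(y,s)\big)^2\, e^{-f(y)}\,\dVol_g\, ds \le e^{o(r^2)}
\]
as $r\to\infty$ (for geodesic balls $B(r)$ about a fixed basepoint), together with the standing assumption that $(M,g,f)$ is complete. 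So the first and only real step is to observe that our hypothesis \cref{time-l2-bound}, namely
\[
\int_0^T \int_M v(y,s)^2\, e^{-f(y)}\,\dVol_g\, ds < \infty,
\]
is a strengthening of the required growth condition: the left-hand side of the Karp–Li size condition is bounded above by this finite constant for every $r$, hence is certainly $e^{o(r^2)}$. Similarly the finite-volume assumption \cref{true-finite-volume-condition} guarantees that the background measure $e^{-f}\dVol_g$ is finite, which is more than enough to run the cited argument (indeed it makes the weighted heat semigroup stochastically complete and legitimizes the integrations by parts in the Karp–Li proof).

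Concretely, I would set $w := v - C$, note that $w$ is still a subsolution of the drift heat equation (subtracting a constant, which is itself a solution since $\Delta_f$ annihilates constants), that $w(\cdot,0)\le 0$, and that $w$ satisfies the same $L^2$-in-spacetime bound as $v$ up to the additive constant $C$ — here one uses $\int_0^T\int_M C^2 e^{-f}\,\dVol_g\,ds = C^2 T \int_M e^{-f}\,\dVol_g < \infty$ by \cref{true-finite-volume-condition}, so that $\int_0^T\int_M w^2 e^{-f}\,\dVol_g\,ds<\infty$ as well. Then the cited maximum principle, applied to the subsolution $w$ with initial bound $0$, yields $w(\cdot,t)\le 0$ for all $t\in[0,T]$, i.e.\ $v(\cdot,t)\le C$.

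I do not expect a genuine obstacle here, since the statement is explicitly flagged in the excerpt as ``probably well-known''; the only point requiring care is bookkeeping — matching the precise hypotheses of whichever version of the Karp–Li principle one cites (some versions phrase the growth condition on $v$ itself rather than on $v^+$, some require it on balls rather than globally, some assume a lower Ricci bound which here is supplied by the soliton/Bakry–Émery condition if needed). So the ``hard part'', such as it is, is simply to quote the cleanest available formulation and verify that \cref{time-l2-bound} and \cref{true-finite-volume-condition} dominate its hypotheses; the substantive analytic work lives entirely inside the cited theorem.
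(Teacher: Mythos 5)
Your proposal is correct and follows essentially the same route as the paper: the paper's own (parenthetical) proof likewise applies the cited version of the Karp--Li maximum principle to $v-C$, noting that $C$ is a solution and that \cref{true-finite-volume-condition} ensures the size condition \cref{time-l2-bound} passes to $v-C$. Your additional remark that the global spacetime $L^2$ bound dominates the $e^{o(r^2)}$-on-balls growth condition in the cited references is exactly the bookkeeping the paper leaves implicit.
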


(Specifically, we apply the version of \cite{Li15} to the function $v-C$,
noting that $C$ is a solution to the drift heat equation
and that by \cref{true-finite-volume-condition} the condition \cref{time-l2-bound} on $v$ holds also for $v-C$.)

Next, let $(M, g, f)$ be a complete weighted manifold satisfying the lower Bakry-Emery curvature bound
\begin{equation}
  \Ric(g)+\Hess f\geq -kg,\label{nonnegative-bakry-emery}
\end{equation}
for some $k\geq 0$. (Temporarily, we do not assume \cref{true-finite-volume-condition}.)

\begin{lemma}\label{gradient-subsolution}
  Let $v(y,t)$ be a solution to the drift heat equation \cref{drift-heat-equation}.
  Then $e^{-kt}\lvert dv\rvert^2$ is a subsolution.
\end{lemma}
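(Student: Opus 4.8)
The standard approach is a Bochner-type computation: I would verify directly that $w := e^{-kt}\lvert dv\rvert^2$ satisfies $(\Delta_f - \partial_t)w \geq 0$ by differentiating and invoking the Bochner--Weitzenböck formula with the drift correction. The plan is as follows. First I would recall the drift Bochner formula: for any smooth function $v$,
\[
\Delta_f \tfrac12 \lvert dv\rvert^2 = \lvert \Hess v\rvert^2 + \langle dv, d(\Delta_f v)\rangle + \bigl(\Ric + \Hess f\bigr)(dv, dv).
\]
Second, since $v$ solves $\partial_t v = \Delta_f v$, commuting $\partial_t$ with $d$ gives $\partial_t \lvert dv\rvert^2 = 2\langle dv, d(\partial_t v)\rangle = 2\langle dv, d(\Delta_f v)\rangle$. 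Subtracting, the cross terms cancel and
\[
\bigl(\Delta_f - \partial_t\bigr)\tfrac12 \lvert dv\rvert^2 = \lvert \Hess v\rvert^2 + \bigl(\Ric + \Hess f\bigr)(dv, dv) \geq -k\lvert dv\rvert^2,
\]
using \cref{nonnegative-bakry-emery} and discarding the nonnegative Hessian term.

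**Incorporating the exponential factor.** Next I would handle the $e^{-kt}$ prefactor. Writing $w = e^{-kt}\lvert dv\rvert^2$, the operator $\Delta_f$ acts only in space so $\Delta_f w = e^{-kt}\Delta_f \lvert dv\rvert^2$, while $\partial_t w = e^{-kt}\bigl(\partial_t \lvert dv\rvert^2 - k\lvert dv\rvert^2\bigr)$. Therefore
\[
\bigl(\Delta_f - \partial_t\bigr)w = e^{-kt}\Bigl[\bigl(\Delta_f - \partial_t\bigr)\lvert dv\rvert^2 + k\lvert dv\rvert^2\Bigr] = e^{-kt}\Bigl[2\lvert \Hess v\rvert^2 + 2\bigl(\Ric + \Hess f\bigr)(dv,dv) + k\lvert dv\rvert^2\Bigr],
\]
and the bracket is $\geq 2\lvert \Hess v\rvert^2 + (-2k + k + \text{(the }k\text{ we kept)})\lvert dv\rvert^2$; more carefully, $2(\Ric+\Hess f)(dv,dv) \geq -2k\lvert dv\rvert^2$, so the bracket is $\geq 2\lvert\Hess v\rvert^2 - k\lvert dv\rvert^2$. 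That is not obviously nonnegative — so the exponent must be chosen to absorb exactly the curvature deficit, which suggests the correct statement uses $e^{-2kt}$, or that I have misplaced a factor of $2$ above. The resolution is to track the factor of $\tfrac12$ consistently: applying the Bochner identity to $\lvert dv\rvert^2$ (not $\tfrac12\lvert dv\rvert^2$) doubles every term, giving $(\Delta_f - \partial_t)\lvert dv\rvert^2 = 2\lvert\Hess v\rvert^2 + 2(\Ric+\Hess f)(dv,dv) \geq -2k\lvert dv\rvert^2$, and then the bracket above becomes $2\lvert\Hess v\rvert^2 + 2(\Ric+\Hess f)(dv,dv) + k\lvert dv\rvert^2 \geq (-2k+k)\lvert dv\rvert^2 = -k\lvert dv\rvert^2$ — still not zero. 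I would need to recheck whether the intended prefactor is genuinely $e^{-kt}$ with a Bakry--Emery bound of $-\tfrac{k}{2}g$ implicit in the paper's conventions, or whether $e^{-2kt}$ is what makes the subsolution property exact; this bookkeeping is the one place to be careful.

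**Main obstacle.** The computation itself is routine; the only real subtlety is the one flagged above — getting the constant in the exponential to match the constant $k$ in the curvature bound \cref{nonnegative-bakry-emery} so that the curvature error term is exactly absorbed and the Hessian term is left over as a nonnegative surplus. I would resolve this by carefully normalizing the Bochner formula and the definition of $\lvert dv\rvert^2$, and if a mismatch persists I would conclude the paper's \cref{nonnegative-bakry-emery} should be read with the soliton normalization $k = \tfrac12$ in mind (matching $\Ric + \Hess f \geq \tfrac12 g$ up to sign), in which case the factor $e^{-kt} = e^{-t/2}$ is consistent once one accounts for the fact that a lower bound $\Ric + \Hess f \geq -kg$ gives $(\Delta_f - \partial_t)\lvert dv\rvert^2 \geq -2k\lvert dv\rvert^2$ and hence the right multiplier is $e^{-2kt}$; I would state the lemma with whichever exponent the downstream argument in \cref{nonincreasing-sec} actually consumes. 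A secondary point worth a sentence: the identity is pointwise and requires only smoothness of $v$, so no integrability or decay hypotheses enter here — those are deferred to the maximum-principle step, \cref{maximum-principle}.
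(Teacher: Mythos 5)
Your approach is the same as the paper's: the drift Bochner formula, commuting $\partial_t$ with $d$ to cancel the cross term, and absorbing the curvature term with the exponential prefactor. More importantly, the bookkeeping concern you flag is genuine, and your diagnosis is the correct one. Differentiating the prefactor contributes $+\tfrac{k}{2}e^{-kt}\lvert dv\rvert^2$ to $\tfrac12(\Delta_f-\partial_t)\bigl(e^{-kt}\lvert dv\rvert^2\bigr)$, so the computation gives
\[
\tfrac12\bigl(\Delta_f-\partial_t\bigr)\bigl(e^{-kt}\lvert dv\rvert^2\bigr)
=e^{-kt}\Bigl[\lvert\Hess v\rvert^2+\bigl(\Ric+\Hess f\bigr)(dv,dv)+\tfrac{k}{2}\lvert dv\rvert^2\Bigr],
\]
and under \cref{nonnegative-bakry-emery} the bracket is only bounded below by $\lvert\Hess v\rvert^2-\tfrac{k}{2}\lvert dv\rvert^2$, which is not manifestly nonnegative. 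The paper's own proof records that last term as $k\lvert dv\rvert^2$ instead of $\tfrac{k}{2}\lvert dv\rvert^2$ --- exactly the factor-of-two slip you anticipated --- so the lemma as stated does not follow from the computation; the correct statement is that $e^{-2kt}\lvert dv\rvert^2$ is a subsolution (equivalently, $e^{-kt}\lvert dv\rvert^2$ is a subsolution under the stronger bound $\Ric+\Hess f\ge-\tfrac{k}{2}g$). This is harmless downstream: the bound actually asserted in \cref{uniform-bounds} is $\lVert dv\rVert_g\le e^{kt}C$, which is precisely what the maximum principle applied to $e^{-2kt}\lVert dv\rVert_g^2$ yields, and any exponential-in-time gradient bound suffices for the integrability needed in \cref{use-divergence}. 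Your closing remark is also right: the identity is pointwise, so no integrability hypotheses enter at this stage.
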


\begin{proof}

We have the Bochner identity
\[
\frac{1}{2}\Delta_f\lvert du\rvert^2
=\left\lvert \Hess u\rvert^2+\langle du, d(\Delta_fu)\right\rangle + \Ric_f(du, du).
\]
Also 
\[
\frac{1}{2}\frac{d}{dt}\lvert du\rvert^2
=\left\langle du, d\left(\frac{du}{dt}\right)\right\rangle.  
\]
So
\begin{align*}
\frac{1}{2}\left[\Delta_f-\frac{d}{dt}\right]\left(e^{-kt}\lvert du\rvert^2\right)
&=e^{-kt}\left\{\lvert \Hess u\rvert^2
+\left\langle du, d\left(\left[\Delta_f-\frac{d}{dt}\right]u\right)\right\rangle 
+ \Ric_f(du, du)
+k\lvert du\rvert^2\right\}\\
&=e^{-kt}\left\{\lvert \Hess u\rvert^2
+ \left(\Ric_f+kg\right)(du, du)\right\}\\
&\geq 0,
\end{align*}
the last line by \cref{nonnegative-bakry-emery}.

\end{proof}

Finally, let $(M, g, f)$ be a complete weighted manifold satisfying \cref{true-finite-volume-condition} and \cref{nonnegative-bakry-emery}.
Let $v_0$ be a $C^2$ function on $M$ satisfying, for some constants $C$, the uniform bounds
\begin{equation}
  \left.
\begin{aligned}
|v_0|&\le C\\
\left\lVert dv_0\right\rVert_g &\le C\\
\left\lvert \Delta_f v_0\right\rvert &\le C.
\end{aligned}
\right\rbrace\label{uniform-initial-bounds}
\end{equation}
This function belongs to $L^2(e^{-f}\dVol_g)$ (by \cref{true-finite-volume-condition} and the sup-bound).
So let $v(t,x)$ be the unique smooth solution in $L^2(e^{-f}\dVol_g)$ to
\[
\begin{cases}\label{IVP}
    \frac{dv}{dt}&=\Delta_fv\\
    v|_0&\equiv v_0,
  \end{cases}
\]
which is guaranteed by the theory sketched in \cref{sec:setup}.

\begin{proposition}\label{uniform-bounds}
  For the same constants, for all time,
\begin{align*}
    |v|&\le C\\
    \left\lVert dv\right\rVert_g &\le e^{kt}C\\
    \left\lvert \Delta_f v\right\rvert &\le C.
\end{align*}
  \end{proposition}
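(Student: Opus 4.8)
The plan is to prove the three bounds one at a time, each as an application of the Karp--Li maximum principle \cref{maximum-principle}, using the Bochner computation of \cref{gradient-subsolution} to produce the subsolution needed for the gradient estimate. In each case the solution $v$ (and its relatives $\Delta_f v$, $e^{-kt}|dv|^2$) is a subsolution with the right bound at $t=0$, so the only substantive point is to verify the size hypothesis \cref{time-l2-bound}. For the sup-bound and the Laplacian bound this is cheap; for the gradient bound it is the real work.

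First I would treat $|v|$ and $|\Delta_f v|$. Both $v$ and $-v$ solve \cref{drift-heat-equation}, hence are subsolutions, and at $t=0$ they are $\le C$ by \cref{uniform-initial-bounds}. The size hypothesis holds because, by the classical estimate \cref{classical-bound}, $\lVert v(\cdot,s)\rVert_{L^2(e^{-f}\dVol_g)} = \lVert P_s v_0\rVert_{L^2(e^{-f}\dVol_g)} \le \lVert v_0\rVert_{L^2(e^{-f}\dVol_g)}$ for every $s\ge 0$, so $\int_0^T\!\int_M v^2 e^{-f}\dVol_g\,ds \le T\lVert v_0\rVert_{L^2(e^{-f}\dVol_g)}^2 < \infty$; thus \cref{maximum-principle} applied to $\pm v$ gives $|v|\le C$ for all time. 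For the Laplacian, differentiating \cref{drift-heat-equation} in $t$ shows that $\Delta_f v = \partial_t v$ is itself a solution of the drift heat equation, with initial value $\Delta_f v_0$; by \cref{true-finite-volume-condition} and the sup-bound in \cref{uniform-initial-bounds} this initial value lies in $L^2(e^{-f}\dVol_g)$, so by uniqueness $\Delta_f v(\cdot,s) = P_s(\Delta_f v_0)$, and \cref{classical-bound} again supplies the size hypothesis. Applying \cref{maximum-principle} to $\pm\Delta_f v$, whose initial data are $\le C$, gives $|\Delta_f v|\le C$ for all time.

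The gradient bound is the delicate step. By \cref{gradient-subsolution} (using the lower Bakry--Emery bound \cref{nonnegative-bakry-emery}), $w := e^{-kt}|dv|^2$ is a subsolution, and $w(\cdot,0) = |dv_0|^2 \le C^2$ by \cref{uniform-initial-bounds}; it would therefore suffice to verify the size hypothesis $\int_0^T\!\int_M w^2 e^{-f}\dVol_g\,ds = \int_0^T\!\int_M e^{-2ks}|dv|^4 e^{-f}\dVol_g\,ds < \infty$, since \cref{maximum-principle} would then force $w\le C^2$, i.e. $\lVert dv\rVert_g \le e^{kt/2}C \le e^{kt}C$ (as $k,t\ge 0$). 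I expect this size hypothesis to be the main obstacle: unlike $v$ and $\Delta_f v$, the gradient comes with no free $L^4$-in-space control from \cref{classical-bound}. I would obtain it by an exhaustion argument, legitimate because on a complete manifold geodesic balls are relatively compact and $w$ is therefore integrable over each of them: testing $(\Delta_f - \tfrac{d}{dt})w \ge 0$ against $\eta_R^2 w\,e^{-f}$ for a standard family of compactly supported cutoffs $\eta_R$, absorbing the cross term $\int_M \eta_R w\,\langle d\eta_R, dw\rangle\,e^{-f}$ by Young's inequality, and letting $R\to\infty$, one finds that $s\mapsto\int_M w(s)^2 e^{-f}\dVol_g$ is finite and nonincreasing, hence at most $\int_M |dv_0|^4 e^{-f}\dVol_g \le C^4\int_M e^{-f}\dVol_g < \infty$ by \cref{true-finite-volume-condition}; the space--time integral is then bounded by $T C^4\int_M e^{-f}\dVol_g$. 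To make this fully rigorous one must also rule out pathological a priori growth of $w$, for which it is convenient to prove first a crude gradient estimate of Hamilton--Li--Yau type (valid under \cref{nonnegative-bakry-emery}) giving a locally-in-time $L^\infty$ bound on $|dv|$, combined with the initial bound $|dv_0|\le C$ to handle $s$ near $0$ and with the energy bound $\int_M |dv(s)|^2 e^{-f}\dVol_g \le \lVert v_0\rVert_{L^2(e^{-f}\dVol_g)}\,\lVert\Delta_f v_0\rVert_{L^2(e^{-f}\dVol_g)}$. Once the size hypothesis is secured, \cref{maximum-principle} yields the claimed bound on $\lVert dv\rVert_g$.
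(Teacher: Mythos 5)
Your proposal is correct and follows essentially the same route as the paper: apply the Karp--Li maximum principle \cref{maximum-principle} to $\pm v$, to $e^{-kt}\lVert dv\rVert_g^2$ (a subsolution by the Bochner computation of \cref{gradient-subsolution}), and to $\pm\Delta_f v$ (a solution, since $\Delta_f$ commutes with $\partial_t$). You go further than the paper only in explicitly verifying the size hypothesis \cref{time-l2-bound} --- which the paper's proof leaves implicit --- and you correctly identify the $L^4$ space--time integrability of $\lvert dv\rvert$ as the one point where this requires genuine additional input rather than the classical $L^2$ contraction.
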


\begin{proof}
For each bound we exploit the maximum principle, \cref{maximum-principle}.
We obtain the first bound by applying it to $v$ and $-v$,
the second bound by applying it to $e^{-kt}\left\lVert dv\right\rVert_g^2$,
and the third bound by applying it to $\Delta_fv$ and $-\Delta_fv$.

This is valid since  $e^{-kt}\left\lVert dv\right\rVert_g^2$ is a subsolution (by \cref{gradient-subsolution}),
and $\Delta_fv$ is a solution: indeed,
\[
\frac{d}{dt}\left[\Delta_fv\right]
  =\Delta_f\left[\frac{dv}{dt}\right]
  =\Delta_f\left[\Delta_fv\right].
\]

\end{proof}

\subsection{Derivative computation} \label{subsec:derivative-computation}

\begin{lemma}\label{find-divergence-2}
\begin{align*}
&2v
\Delta_f v 
e^{-\frac{f}{\gamma}}
+2\left\lVert 
dv-\frac{-1+\alpha+\gamma^{-1}}{2}vdf
\right\rVert ^ 2e^{-\frac{f}{\gamma}}
-
\left\{
  \left[
    \frac{1}{2}(1-\alpha)^2
    +\gamma^{-2}\left(\frac{1}{2}-\gamma\right)
  \right]
\left\lVert df\right\rVert ^ 2
+\alpha \Delta_g f
\right\}
v^2e^{-\frac{f}{\gamma}}\\
&=\operatorname{div}_g\left[
\left(2vdv-\alpha v^2df\right)
e^{-\frac{f}{\gamma}}
\right].
\end{align*}
\end{lemma}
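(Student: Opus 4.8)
The plan is to verify this by expanding the divergence on the right-hand side using the product rule and the defining property of the drift Laplacian, then collecting terms by the order in which derivatives fall on $v$. First I would record the basic identity $\operatorname{div}_g(Xe^{-f/\gamma}) = e^{-f/\gamma}(\operatorname{div}_g X - \gamma^{-1}\langle df, X\rangle)$ for any vector field $X$, which is the mechanism converting $\operatorname{div}_g$ into something involving $\Delta_f = \Delta_g - \nabla^g f$ when $\gamma = 1$, and more generally a "twisted" drift operator when $\gamma \neq 1$. Applying this with $X = 2v\,\nabla v - \alpha v^2\,\nabla f$ gives
\[
\operatorname{div}_g\!\left[(2v\,dv - \alpha v^2\,df)e^{-f/\gamma}\right]
= e^{-f/\gamma}\Bigl(\operatorname{div}_g(2v\nabla v) - \alpha\operatorname{div}_g(v^2\nabla f) - \gamma^{-1}\langle df,\, 2v\nabla v - \alpha v^2\nabla f\rangle\Bigr).
\]

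Next I would expand each piece: $\operatorname{div}_g(2v\nabla v) = 2\lvert dv\rvert^2 + 2v\,\Delta_g v$, and $\operatorname{div}_g(v^2\nabla f) = 2v\langle dv, df\rangle + v^2\,\Delta_g f$. Substituting and using $\Delta_g v = \Delta_f v + \langle df, dv\rangle$ to trade $\Delta_g v$ for $\Delta_f v$ isolates the $2v\,\Delta_f v\,e^{-f/\gamma}$ term cleanly. What remains is a quadratic form in the "one-derivative" data $dv$ and $v\,df$, namely terms proportional to $\lvert dv\rvert^2$, $v\langle dv, df\rangle$, $v^2\lvert df\rvert^2$, together with the single term $-\alpha v^2\,\Delta_g f\,e^{-f/\gamma}$ that has no analogue on the left. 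The heart of the computation is then to recognize that the coefficients of $\lvert dv\rvert^2$, $v\langle dv,df\rangle$, and the matching part of $v^2\lvert df\rvert^2$ assemble precisely into the completed square $2\bigl\lVert dv - \tfrac{-1+\alpha+\gamma^{-1}}{2}\,v\,df\bigr\rVert^2$; concretely, expanding that square gives $2\lvert dv\rvert^2 - 2(-1+\alpha+\gamma^{-1})v\langle dv,df\rangle + \tfrac{1}{2}(-1+\alpha+\gamma^{-1})^2 v^2\lvert df\rvert^2$, and one checks the cross-term matches $-2(-1+\alpha+\gamma^{-1})$ as produced above. Whatever $\lvert df\rvert^2$ coefficient is left over after subtracting $\tfrac{1}{2}(-1+\alpha+\gamma^{-1})^2$ should reduce, by straightforward algebra, to $-\bigl[\tfrac12(1-\alpha)^2 + \gamma^{-2}(\tfrac12-\gamma)\bigr]$, which is the coefficient claimed in the statement.

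The main obstacle is purely bookkeeping: keeping track of signs and of the three contributions to the $v\langle dv, df\rangle$ coefficient (one from $\operatorname{div}_g(2v\nabla v)$ after the $\Delta_g\to\Delta_f$ substitution, one from $-\alpha\operatorname{div}_g(v^2\nabla f)$, one from the $-\gamma^{-1}\langle df, 2v\nabla v\rangle$ term) and confirming they sum to exactly what the completed square demands, so that no residual $v\langle dv,df\rangle$ term survives. I would organize this by writing the right-hand side as $e^{-f/\gamma}$ times a sum indexed by the monomials $\{v\Delta_f v,\ \lvert dv\rvert^2,\ v\langle dv,df\rangle,\ v^2\lvert df\rvert^2,\ v^2\Delta_g f\}$, reading off each coefficient, and then matching against the left-hand side term by term. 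Note there is no hypothesis here beyond smoothness — the identity is a pointwise algebraic consequence of the product rule and holds on any weighted manifold — so no soliton or curvature condition enters; those are imposed only downstream when this lemma is integrated and specialized.
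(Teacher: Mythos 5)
Your proposal is correct and follows essentially the same route as the paper: both arguments verify the identity pointwise by expanding the divergence via the product rule and matching the coefficients of the monomials $v\Delta_g v$, $\lVert dv\rVert^2$, $v\langle df,dv\rangle$, $v^2\lVert df\rVert^2$, and $v^2\Delta_g f$ against those produced by the completed square; the only cosmetic difference is that you trade $\Delta_g v$ for $\Delta_f v$ on the right-hand side, whereas the paper expands $\Delta_f v$ into $\Delta_g v - \langle df,dv\rangle$ on the left. Your key coefficient checks (cross-term $-2(-1+\alpha+\gamma^{-1})$ and residual $\lVert df\rVert^2$ coefficient $\alpha\gamma^{-1}$) are exactly the ones the paper records.
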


\begin{proof}
\begin{align*}
  \left\lVert 
dv-\frac{-1+\alpha+\gamma^{-1}}{2}vdf
\right\rVert ^ 2
&=
\left\lVert dv\right\rVert ^ 2
-(-1+\alpha+\gamma^{-1})v
\left\langle
  df,dv\right\rangle
  + \left(\frac{-1+\alpha+\gamma^{-1}}{2}\right)^2v^2
   \left\lVert 
    df
    \right\rVert ^ 2,
\end{align*}
so the coefficients, respectively, of
\[
v\Delta_g ve^{-\frac{f}{\gamma}},\quad
\lVert dv\rVert^2 e^{-\frac{f}{\gamma}},\quad
(\Delta_gf) v^2 e^{-\frac{f}{\gamma}},\quad
v\langle df, dv\rangle e^{-\frac{f}{\gamma}},\quad
v^2\lVert df\rVert^2e^{-\frac{f}{\gamma}}
\]
in the left-hand side are 2, 2, $-\alpha$,
\begin{align*}
-2-2(-1+\alpha+\gamma^{-1})
&=
-2\alpha-2\gamma^{-1},
\end{align*}
and 
\begin{align*}
&2\left(\frac{-1+\alpha+\gamma^{-1}}{2}\right)^2
-   \left[
  \frac{1}{2}(1-\alpha)^2
  +\gamma^{-2}\left(\frac{1}{2}-\gamma\right)
\right]\\
&=
\frac{1}{2}(-1+\alpha)^2
+(-1+\alpha)\gamma^{-1}
+\frac{1}{2}\gamma^{-2}
-   \left[
  \frac{1}{2}(1-\alpha)^2
  +\frac{1}{2}\gamma^{-2}
  -\gamma^{-1}
\right]
\\
&=\alpha\gamma^{-1}.
\end{align*}

On the right-hand side, we have,
\begin{align*}
\operatorname{div}_g\left(
vdve^{-\frac{f}{\gamma}}
\right)
&=
\lVert dv\rVert^2 e^{-\frac{f}{\gamma}}
+v\Delta_g ve^{-\frac{f}{\gamma}}
-\gamma^{-1}
v\langle df, dv\rangle e^{-\frac{f}{\gamma}},
\\
\operatorname{div}_g\left(
v^2df
e^{-\frac{f}{\gamma}}
\right)&=
2v\langle df, dv\rangle e^{-\frac{f}{\gamma}}
+(\Delta_gf) v^2 e^{-\frac{f}{\gamma}}
-\gamma^{-1}
v^2\lVert df\rVert^2e^{-\frac{f}{\gamma}},
\end{align*}
so the coefficients, respectively, of
\[
v\Delta_g ve^{-\frac{f}{\gamma}},\quad
\lVert dv\rVert^2 e^{-\frac{f}{\gamma}},\quad
(\Delta_gf) v^2 e^{-\frac{f}{\gamma}},\quad
v\langle df, dv\rangle e^{-\frac{f}{\gamma}},\quad
v^2\lVert df\rVert^2e^{-\frac{f}{\gamma}}
\]
in the right-hand side $\operatorname{div}_g\left[
  \left(2vdv-\alpha v^2df\right)
  e^{-\frac{f}{\gamma}}\right]$
are 2, 2, $-\alpha$,
$-2\gamma^{-1}-2\alpha$, and $\alpha\gamma^{-1}$.

The coefficients agree, so the identity holds.

\end{proof}

Let $(M, g, f)$ be a complete weighted manifold satisfying 
that for all real $\gamma>0$,
\begin{equation}\label{integral-bounds-f-derivs}
  \left.
  \begin{aligned}
    \int_M
    e^{-\frac{f}{\gamma}}\dVol_g&<\infty\\
    \int_M
  \left\lVert df\right\rVert_g ^ 2
  e^{-\frac{f}{\gamma}}\dVol_g&<\infty\\
  \int_M
  \left\lvert\Delta_g f\right\rvert
  e^{-\frac{f}{\gamma}}\dVol_g&<\infty.
\end{aligned}\right\rbrace
\end{equation}

\begin{proposition}\label{use-divergence}
  Let $v\in C^2(M)$ be a function satisfying the bounds \cref{uniform-initial-bounds}.
 Then for any real $\alpha$ and any positive real $\gamma$,
  \begin{align*}
  &\int_M2v
  \Delta_f v 
  e^{-\frac{f}{\gamma}}\dVol_g\notag\\
  &= 
  \int_M
  -2\left\lVert 
  dv-\frac{-1+\alpha+\gamma^{-1}}{2}vdf
  \right\rVert ^ 2e^{-\frac{f}{\gamma}}
  +
  \left\{
    \left[
      \frac{1}{2}(1-\alpha)^2
      +\gamma^{-2}\left(\frac{1}{2}-\gamma\right)
    \right]
  \left\lVert df\right\rVert ^ 2
  +\alpha \Delta_g f
  \right\}
  v^2e^{-\frac{f}{\gamma}}
  \dVol_g.
  \end{align*}
\end{proposition}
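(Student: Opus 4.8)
The plan is to integrate the pointwise identity of \cref{find-divergence-2} over $M$ against $\dVol_g$ and to show that the divergence term on its right-hand side contributes nothing; the Proposition then follows by rearranging. Write $X := \left(2v\,dv - \alpha v^2\,df\right)e^{-f/\gamma}$, so that \cref{find-divergence-2} reads
\[
2v\,\Delta_f v\,e^{-\frac{f}{\gamma}}
+2\left\lVert dv-\tfrac{-1+\alpha+\gamma^{-1}}{2}v\,df\right\rVert^2 e^{-\frac{f}{\gamma}}
-\left\{\left[\tfrac12(1-\alpha)^2+\gamma^{-2}\left(\tfrac12-\gamma\right)\right]\lVert df\rVert^2+\alpha\,\Delta_g f\right\}v^2 e^{-\frac{f}{\gamma}}
=\operatorname{div}_g X.
\]

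First I would check that both $X$ and $\operatorname{div}_g X$ lie in $L^1(\dVol_g)$. For $X$, the bounds \cref{uniform-initial-bounds} give $\lvert X\rvert \le \bigl(2C^2+\lvert\alpha\rvert C^2\lVert df\rVert_g\bigr)e^{-f/\gamma}$, and since $\lVert df\rVert_g \le \tfrac12\bigl(1+\lVert df\rVert_g^2\bigr)$ this is integrable by the first two lines of \cref{integral-bounds-f-derivs}. For $\operatorname{div}_g X$, I would use the identity above: expanding the squared norm and using $\lvert v\rvert,\lVert dv\rVert_g,\lvert\Delta_f v\rvert\le C$ together with the same elementary inequality for $\lVert df\rVert_g$, one bounds $\lvert\operatorname{div}_g X\rvert$ by a fixed linear combination of $e^{-f/\gamma}$, $\lVert df\rVert_g^2\,e^{-f/\gamma}$ and $\lvert\Delta_g f\rvert\,e^{-f/\gamma}$, hence $\operatorname{div}_g X\in L^1(\dVol_g)$ by all three lines of \cref{integral-bounds-f-derivs}.

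Next I would run a cutoff version of the divergence theorem. On the complete manifold $(M,g)$ pick, for each $R>0$, a smooth $\chi_R$ with $\chi_R\equiv 1$ on the geodesic ball $B_R$, $\operatorname{supp}\chi_R\subset B_{2R}$, $0\le\chi_R\le 1$ and $\lVert d\chi_R\rVert_g\le 2/R$ (a standard mollification of the distance function). Applying the divergence theorem to $\chi_R X$ on the compact set $\operatorname{supp}\chi_R$ gives
\[
\int_M \chi_R\,\operatorname{div}_g X\,\dVol_g = -\int_M \langle d\chi_R, X\rangle_g\,\dVol_g,
\]
and the right-hand side is at most $\tfrac2R\int_{M\setminus B_R}\lvert X\rvert\,\dVol_g$ in absolute value, which tends to $0$ as $R\to\infty$ since $X\in L^1(\dVol_g)$. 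On the left-hand side, $\chi_R\,\operatorname{div}_g X\to\operatorname{div}_g X$ pointwise with $\lvert\chi_R\,\operatorname{div}_g X\rvert\le\lvert\operatorname{div}_g X\rvert\in L^1(\dVol_g)$, so dominated convergence gives $\int_M\chi_R\,\operatorname{div}_g X\,\dVol_g\to\int_M\operatorname{div}_g X\,\dVol_g$. Hence $\int_M\operatorname{div}_g X\,\dVol_g=0$, and integrating the displayed identity over $M$ yields the assertion.

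The only genuine content beyond the algebra already performed in \cref{find-divergence-2} is this Stokes'-theorem step, and within it the only thing to get right is the $L^1$ bookkeeping of the second paragraph. I do not expect a serious obstacle: the quadratic growth of $f$ is irrelevant here precisely because \cref{integral-bounds-f-derivs} is assumed for \emph{every} $\gamma>0$, so every Gaussian-type weight that arises is covered.
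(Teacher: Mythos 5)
Your proposal is correct and follows essentially the same route as the paper: integrate the pointwise identity of \cref{find-divergence-2}, verify via \cref{uniform-initial-bounds} and \cref{integral-bounds-f-derivs} that each term is in $L^1(\dVol_g)$, and show that the divergence term integrates to zero. The only difference is that where the paper cites Yau's lemma (the existence of an exhaustion along which the integral of the divergence of an $L^1$ vector field tends to zero) you inline an explicit cutoff-function argument, which is a standard and equally valid justification of the same fact.
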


\begin{proof}
We first note that both integrands are integrable.
Indeed, note that
\begin{align*}
  \left\lVert 
    dv-\frac{-1+\alpha+\gamma^{-1}}{2}vdf
  \right\rVert ^ 2
  &\le C\left[
  \left\lVert 
    dv \right\rVert ^ 2
  +v^2\left\lVert df
  \right\rVert ^ 2\right];
\end{align*}
we have by \cref{uniform-initial-bounds} that $v$, $\left\lVert dv\right\rVert ^ 2$ and $\Delta_fv$ are bounded,
so by \cref{integral-bounds-f-derivs} both integrands are integrable.

Therefore their difference, 
\begin{align*}
  &\int_M
  \left\{
    2v
    \Delta_f v 
    e^{-\frac{f}{\gamma}}
    +2\left\lVert 
    dv-\frac{-1+\alpha+\gamma^{-1}}{2}vdf
    \right\rVert ^ 2e^{-\frac{f}{\gamma}}\right.\\
   &\qquad \left.-
    \left(
      \left[
        \frac{1}{2}(1-\alpha)^2
        +\gamma^{-2}\left(\frac{1}{2}-\gamma\right)
      \right]
    \left\lVert df\right\rVert ^ 2
    +\alpha \Delta_g f
    \right)
    v^2e^{-\frac{f}{\gamma}}
    \right\}\dVol_g,
  \end{align*}
is integrable and it suffices to prove that this integral is zero.
By \cref{find-divergence-2} the integrand is equal to
  $\operatorname{div}_g\left[
    \left(2vdv-\alpha v^2df\right)
    e^{-\frac{f}{\gamma}}
    \right]$,
so it suffices to prove that 
\begin{align*}
\int_M
\operatorname{div}_g\left[
    \left(2vdv-\alpha v^2df\right)
    e^{-\frac{f}{\gamma}}
    \right]
    \dVol_g&=0.
\end{align*}

Next, observe that 
\begin{align*}
  \left\lVert\left(2vdv-\alpha v^2df\right)
  e^{-\frac{f}{\gamma}}\right\rVert
  &\le \left[2|v|\left\lVert dv\right\rVert+|\alpha|v^2\left\lVert df\right\rVert\right] e^{-\frac{f}{\gamma}},
\end{align*}
so by \cref{uniform-bounds} and \cref{integral-bounds-f-derivs} the quantity
$\left(2vdv-\alpha v^2df\right)
  e^{-\frac{f}{\gamma}}$
is integrable.
By \cite[section 1]{Yau76} this implies the existence of an
exhaustion $(U_i)_{i\in\mathbb{N}}$ of $M$ such that
\begin{align*}
\lim_{i\to\infty}\int_{U_i}
\operatorname{div}_g\left[
  \left(2vdv-\alpha v^2df\right)
  e^{-\frac{f}{\gamma}}
  \right]
\dVol_g&=0.
\end{align*}
Since $\operatorname{div}_g\left[
    \left(2vdv-\alpha v^2df\right)
    e^{-\frac{f}{\gamma}}
    \right]$
is also integrable, by the dominated convergence theorem this implies that 
\begin{align*}
  \int_{M}
  \operatorname{div}_g\left[
    \left(2vdv-\alpha v^2df\right)
    e^{-\frac{f}{\gamma}}
    \right]
  \dVol_g&=0,
  \end{align*}
as required.

\end{proof}

Now, suppose that the weighted manifold $(M, g, f)$ also satisfies \cref{nonnegative-bakry-emery} (Bakry-Emery-Ricci curvature bounded below).

\begin{proposition}\label{compute-derivative}
  Let $v_0\in C^2(M)$ be a function satisfying the bounds \cref{uniform-initial-bounds},
  and let $v(t,y)$ be the $L^2(e^{-f}\dVol_g)$ solution to the drift heat equation with initial condition $v_0$.
  Then for  positive $\gamma(t)$ and $\mu(t)$, and for any real $\alpha$,
  \[
    \mu(t)^{-n/2}
    \frac{d}{dt}
    \left(
      \mu(t)^{n/2}\int v(t,\cdot)^2e^{-\frac{f}{\gamma(t)}}\dVol_g
    \right)
  \]
  is less than or equal to
  \begin{align*}
    &\int_M
    \left\{
      \left[
        \frac{1}{2}(1-\alpha)^2
        +\gamma^{-2}\left(\frac{1}{2}-\gamma\right)
      \right]
    \left\lVert df\right\rVert ^ 2
    +\alpha \Delta_g f
    +\frac{\dot\gamma}{\gamma^2}f
    +\frac{\dot\mu}{\mu}\frac{n}{2}
  \right\}
    v^2e^{-\frac{f}{\gamma}}
    \dVol_g.
  \end{align*}
\end{proposition}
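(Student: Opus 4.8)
The plan is to differentiate in $t$ directly, use \cref{use-divergence} to rewrite the resulting spatial integral, and then discard one manifestly nonpositive term. Since $\partial_t v=\Delta_f v$ and $\partial_t e^{-f/\gamma(t)}=\frac{\dot\gamma}{\gamma^2}f\,e^{-f/\gamma}$, the integrand has time-derivative
\[
\frac{\partial}{\partial t}\left(v^2 e^{-f/\gamma(t)}\right)
= 2v\,\Delta_f v\, e^{-f/\gamma} + \frac{\dot\gamma}{\gamma^2}\, f\, v^2 e^{-f/\gamma}.
\]
Granting for the moment that differentiation passes inside the integral, the product rule for the factor $\mu(t)^{n/2}$ then gives
\begin{align*}
\mu(t)^{-n/2}\frac{d}{dt}\left(\mu(t)^{n/2}\int v^2 e^{-f/\gamma}\dVol_g\right)
&= \frac{n}{2}\frac{\dot\mu}{\mu}\int v^2 e^{-f/\gamma}\dVol_g\\
&\qquad + \int 2v\,\Delta_f v\, e^{-f/\gamma}\dVol_g
+ \int \frac{\dot\gamma}{\gamma^2}f\,v^2 e^{-f/\gamma}\dVol_g.
\end{align*}

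I would justify the interchange with the standard dominated-convergence criterion for differentiating under the integral. The key input is \cref{uniform-bounds}, which furnishes bounds $|v|\le C$ and $|\Delta_f v|\le C$ uniform in $t$; combined with local bounds for $\gamma(t),\mu(t)$ and their derivatives, this dominates $\partial_t(v^2 e^{-f/\gamma})$, for $t$ in a compact interval, by a fixed multiple of $(1+|f|)\bigl(e^{-f/\gamma_-}+e^{-f/\gamma_+}\bigr)$ for suitable $0<\gamma_-<\gamma_+$, which is integrable by \cref{integral-bounds-f-derivs} (the $|f|$ factor costing one extra application of \cref{integral-bounds-f-derivs} at a nearby parameter value, and being harmless when $f\ge0$, as on a shrinking soliton). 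This bookkeeping --- which is exactly why the uniform-in-time estimates of \cref{subsec:spatial-bounds} were needed --- is the main point requiring care; the remaining steps are formal.

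Finally, for each fixed $t$ the function $v(t,\cdot)$ is $C^2$ and, by \cref{uniform-bounds}, satisfies the bounds \cref{uniform-initial-bounds} with constant $e^{kt}C$, so \cref{use-divergence} applies to $v(t,\cdot)$ and yields
\begin{align*}
\int 2v\,\Delta_f v\, e^{-f/\gamma}\dVol_g
&= \int\biggl(-2\left\lVert dv-\tfrac{-1+\alpha+\gamma^{-1}}{2}v\,df\right\rVert^2\\
&\qquad +\left\{\left[\tfrac12(1-\alpha)^2+\gamma^{-2}\bigl(\tfrac12-\gamma\bigr)\right]\lVert df\rVert^2+\alpha\Delta_g f\right\}v^2\biggr)e^{-f/\gamma}\dVol_g.
\end{align*}
Substituting this into the displayed identity and dropping the nonpositive term $-2\lVert dv-\tfrac{-1+\alpha+\gamma^{-1}}{2}v\,df\rVert^2 e^{-f/\gamma}$ produces precisely the stated upper bound once the coefficients of $v^2 e^{-f/\gamma}$ are collected.
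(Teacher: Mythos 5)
Your proof is correct and follows essentially the same route as the paper: differentiate under the integral, use $\partial_t v = \Delta_f v$, apply \cref{use-divergence} to $v(t,\cdot)$ (licensed by \cref{uniform-bounds}), and drop the nonpositive gradient-square term. The only difference is that you explicitly justify the interchange of $d/dt$ with the integral via domination, a step the paper's proof leaves implicit; your bookkeeping there is sound.
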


\begin{proof}
\begin{align*}
&\frac{d}{dt}
\left(
  \mu(t)^{n/2}\int v(t,\cdot)^2e^{-\frac{f}{\gamma(t)}}\dVol_g
\right)\\
&=
\mu(t)^{n/2}\frac{d}{dt}
\left(
\int v^2e^{-\frac{f}{\gamma(t)}}\dVol_g
\right)
+\frac{n}{2}\dot\mu(t)\mu(t)^{n/2-1}
\left(\int v^2e^{-\frac{f}{\gamma(t)}}\dVol_g
\right)\\
&= 
\mu(t)^{n/2}
\int
  \left\{
  2v\frac{dv}{dt}
  +\frac{\dot\gamma(t)}{\gamma(t)^2}fv^2
  +\frac{\dot\mu(t)}{\mu(t)}\frac{n}{2}v^2
  \right\}
  e^{-\frac{f}{\gamma(t)}}\dVol_g\\
&= 
\mu(t)^{n/2}
\int
  \left\{
  2v\Delta_fv
  +\frac{\dot\gamma(t)}{\gamma(t)^2}fv^2
  +\frac{\dot\mu(t)}{\mu(t)}\frac{n}{2}v^2
  \right\}
  e^{-\frac{f}{\gamma(t)}}\dVol_g,
\end{align*}
the last line since $\frac{dv}{dt}=\Delta_fv$.

By \cref{uniform-bounds}, \cref{use-divergence} applies to $v(t,\cdot)$.
So we have,
\begin{align*}
  \int_M2v
  \Delta_fv
  e^{-\frac{f}{\gamma}}\dVol_g
  &\le 
  \int_M
  \left\{
    \left[
      \frac{1}{2}(1-\alpha)^2
      +\gamma^{-2}\left(\frac{1}{2}-\gamma\right)
    \right]
  \left\lVert df\right\rVert ^ 2
  +\alpha \Delta_g f
  \right\}
  v^2e^{-\frac{f}{\gamma}}
  \dVol_g.
  \end{align*}
The result follows.
\end{proof}

\subsection{Proof of \cref{main-result}} \label{nonincreasing-sec}

Suppose that $(M,g,f)$ is a gradient shrinking Ricci soliton.
This implies \cref{integral-bounds-f-derivs} \cite{CZ10}.
As usual, for simplicity we take the soliton constant to be $\tfrac{1}{2}$.

We will use the following ansatz to analyze the possible pseudo-Gaussian bounds for the drift heat equation.

\begin{proposition}\label{complicated-derivative}
  Let $v\in C^2(M)$ be a function satisfying the bounds \cref{uniform-initial-bounds}.
  Then for  positive $\gamma(t)$ and $\mu(t)$, and any real $\alpha$, for all $t\geq 0$,
  \[
    \mu(t)^{-n/2}
    \frac{d}{dt}
    \left(
      \mu(t)^{n/2}\int (P_tv)^2e^{-\frac{f}{\gamma(t)}}\dVol_g
    \right)
  \]
  is less than or equal to
  \begin{align*}
    \int_M
    \left\{
      \left[
    \frac{1}{2}(1-\alpha)^2
    +\gamma^{-2}\left(\frac{1}{2}-\gamma+\dot\gamma\right)
    \right]f
    +\left(\alpha+\frac{\dot\mu}{\mu}\right)\frac{n}{2}
  \right\}
    (P_tv)^2e^{-\frac{f}{\gamma}}
    \dVol_g.
    \end{align*}
\end{proposition}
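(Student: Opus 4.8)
The plan is to deduce this directly from \cref{compute-derivative}, using the gradient shrinking soliton structure equations \cref{soliton-identities} to rewrite the curvature quantities $\lVert df\rVert^2$ and $\Delta_g f$ appearing on its right-hand side in terms of $f$, a constant, and the scalar curvature $R_g$, and then discarding the scalar-curvature contribution because of its sign.

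First I would check that \cref{compute-derivative} applies. Since the soliton constant is normalized to $\tfrac12$, we have $\Ric(g)+\Hess f=\tfrac12 g\ge 0$, so the lower Bakry-Emery bound \cref{nonnegative-bakry-emery} holds (with $k=0$); the integrability conditions \cref{integral-bounds-f-derivs} hold by \cite{CZ10}, and \cref{true-finite-volume-condition} holds as well. Thus, for $v\in C^2(M)$ satisfying \cref{uniform-initial-bounds}, \cref{compute-derivative} applied to the solution $P_tv$ bounds $\mu^{-n/2}\tfrac{d}{dt}\bigl(\mu^{n/2}\int_M (P_tv)^2 e^{-f/\gamma}\dVol_g\bigr)$ above by $\int_M\bigl\{A\lVert df\rVert^2+\alpha\Delta_g f+\tfrac{\dot\gamma}{\gamma^2}f+\tfrac{\dot\mu}{\mu}\tfrac n2\bigr\}(P_tv)^2 e^{-f/\gamma}\dVol_g$, where I abbreviate $A:=\tfrac12(1-\alpha)^2+\gamma^{-2}(\tfrac12-\gamma)$.

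Next I would substitute the soliton identities. From \cref{soliton-identities} we have $\lVert df\rVert^2=f-R_g$ and $\Delta_g f=\tfrac n2-R_g$, so the integrand in braces becomes
\[
\Bigl(A+\tfrac{\dot\gamma}{\gamma^2}\Bigr)f+\Bigl(\alpha+\tfrac{\dot\mu}{\mu}\Bigr)\tfrac n2-(A+\alpha)R_g.
\]
A direct computation gives $A+\tfrac{\dot\gamma}{\gamma^2}=\tfrac12(1-\alpha)^2+\gamma^{-2}\bigl(\tfrac12-\gamma+\dot\gamma\bigr)$, which is exactly the coefficient of $f$ in the statement, and completing the square gives $A+\alpha=\tfrac12(1-\alpha)^2+\alpha+\gamma^{-2}(\tfrac12-\gamma)=\tfrac12\alpha^2+\tfrac12(\gamma^{-1}-1)^2$.

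Finally I would drop the scalar-curvature term: since $A+\alpha\ge 0$ and the scalar curvature of a complete gradient shrinking Ricci soliton is nonnegative (a theorem of B.-L.\ Chen), the quantity $-(A+\alpha)R_g(P_tv)^2e^{-f/\gamma}$ is pointwise $\le 0$, so deleting it from the integrand only increases the right-hand side, which yields exactly the asserted inequality. The one step that is not pure bookkeeping is recognizing that this residual term has the favorable sign — i.e.\ the algebraic identity $A+\alpha=\tfrac12\alpha^2+\tfrac12(\gamma^{-1}-1)^2\ge 0$ together with the nonnegativity of $R_g$ on shrinking solitons; everything else is rearrangement of \cref{compute-derivative} via \cref{soliton-identities}.
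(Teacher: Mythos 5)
Your proof is correct and follows essentially the same route as the paper: substitute the soliton identities \cref{soliton-identities} into \cref{compute-derivative}, identify the coefficient of $f$, and discard the scalar-curvature term using the completed square together with the nonnegativity of $R_g$ on shrinking solitons. Your bookkeeping of the $R_g$-coefficient as $-(A+\alpha)$ is in fact the correct one --- the paper's intermediate display writes $-A+\alpha$, a sign typo, though its final expression $-\tfrac12\left[\alpha^2+(\gamma^{-1}-1)^2\right]$ agrees with your computation.
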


\begin{proof}
By the soliton identities \cref{soliton-identities},
\begin{align*}
\lVert df\rVert^2&=f-R_g\\
\Delta_g f &= \frac{n}{2}-R_g.
\end{align*}
The result follows by substituting these into \cref{compute-derivative},
and noting that the resulting coefficient for $R_g$ is
\begin{align*}
  -\left[
    \frac{1}{2}(1-\alpha)^2
    +\gamma^{-2}\left(\frac{1}{2}-\gamma\right)
  \right]
  + \alpha
&=-\frac{1}{2}\left[
  \alpha^2
  +(\gamma^{-1}-1)^2\right]\\
  &\le 0.
\end{align*}

Since $R_g$ is nonnegative \cite[Corollary 2.5]{Chen09}, the term  
\[
  -\left[
    \frac{1}{2}(1-\alpha)^2
    +\gamma^{-2}\left(\frac{1}{2}-\gamma\right)
  \right]
  R_g
\]
is nonpositive.
\end{proof}

As an illustration that this ansatz is sufficiently flexible,
we first retrieve the classical bound \cref{classical-bound}, the standard (time-invariant) pseudo-Gaussian bound for the drift heat equation.

\begin{example}\label{time-invariant-pseudo-gaussian}
  Let $v\in C^2(M)$ be a function satisfying the bounds \cref{uniform-initial-bounds}.
  Then for all $t\geq 0$,
  \[
    \lVert P_tv\rVert_{L^2(e^{-f}\dVol_g)}
    \le
    \lVert v\rVert_{L^2(e^{-f}\dVol_g)}.
  \]
\end{example}

\begin{proof}
  We will show that
  \[
    \frac{d}{dt}
      \int (P_tv)^2e^{-f}\dVol_g
  \]
  is nonpositive.

  By \cref{complicated-derivative}, it suffices to show that 
  \begin{align}
    \int_M
    \left\{
      \left[
    \frac{1}{2}(1-\alpha)^2
    +\gamma^{-2}\left(\frac{1}{2}-\gamma+\dot\gamma\right)
    \right]f
    +\left(\alpha+\frac{\dot\mu}{\mu}\right)\frac{n}{2}
  \right\}
    (P_tv)^2e^{-\frac{f}{\gamma}}
    \dVol_g\label{deriv-classical-pseudo-gaussian}
    \end{align}
  is nonpositive, for $\gamma(t)\equiv 1$, $\mu(t)\equiv 1$, and some real $\alpha$.

Indeed, taking $\alpha$ to be 0, we have,
\begin{align*}
  \frac{1}{2}(1-\alpha)^2
  +\gamma^{-2}\left(\frac{1}{2}-\gamma+\dot\gamma\right)
&=  \frac{1}{2}(1-0)^2
+1^{-2}\left(\frac{1}{2}-1+0\right)\\
&=0,\\
\alpha+\frac{\dot\mu}{\mu}&=0+\frac{0}{1}\\
&=0.
\end{align*}
So \cref{deriv-classical-pseudo-gaussian} is zero and hence nonpositive.

  \end{proof}

Now, we consider how to exploit the ansatz \cref{complicated-derivative} optimally,
to give a pseudo-Gaussian bound for a factor $\gamma(t)$ whose growth rate is maximal
(for initial condition $\gamma(0)=1$).
It is apparent from the form of \cref{complicated-derivative}
(and the nonnegativity of the normalized potential $f$)
that we require
\[
\frac{1}{2}(1-\alpha)^2
+\gamma^{-2}\left(\frac{1}{2}-\gamma+\dot\gamma\right)
\le 0,
\]
and this constraint is weakest when $\alpha$ is taken to be 1
(so that $\frac{1}{2}(1-\alpha)^2$ is minimal).
In this case, we can maximize $\dot\gamma(t)$ by choosing $\gamma(t)$ to solve the ODE
\[
\frac{1}{2}-\gamma+\dot\gamma= 0;
\]
the solution to this ODE with initial condition $\gamma(0)=1$ is $\gamma(t):=\frac{e^t+1}{2}$.

Having reverse-engineered this choice of parameters,
here is the direct argument.

\begin{proposition}\label{optimal-pseudo-gaussian}
  Let $v\in C^2(M)$ be a function satisfying the bounds \cref{uniform-initial-bounds}.
  Then for all $t\geq 0$,
  \[
    \lVert P_tv\rVert_{L^2\left(e^{-\frac{f}{\frac{1}{2}(e^t+1)}}\dVol_g\right)}
    \le
    e^{nt/4}\lVert v\rVert_{L^2(e^{-f}\dVol_g)}.
  \]
\end{proposition}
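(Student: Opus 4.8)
The plan is to apply \cref{complicated-derivative} with the parameters that were reverse-engineered just above it: take $\alpha := 1$, take $\gamma(t) := \tfrac{1}{2}(e^t+1)$, and take $\mu(t) := e^{-t}$. With $\alpha = 1$ the term $\tfrac{1}{2}(1-\alpha)^2$ vanishes; and since $\dot\gamma(t) = \gamma(t) - \tfrac12$, the combination $\tfrac12 - \gamma + \dot\gamma$ is identically zero, so $\gamma^{-2}\bigl(\tfrac12 - \gamma + \dot\gamma\bigr) = 0$ as well. Hence the entire coefficient of $f$ in the right-hand side of \cref{complicated-derivative} is zero. Meanwhile $\dot\mu/\mu \equiv -1$, so $\alpha + \dot\mu/\mu = 0$ and the remaining coefficient $\bigl(\alpha + \dot\mu/\mu\bigr)\tfrac n2$ is zero too. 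Thus \cref{complicated-derivative} yields $\mu(t)^{-n/2}\frac{d}{dt}\bigl(\mu(t)^{n/2}\int (P_tv)^2 e^{-f/\gamma(t)}\dVol_g\bigr) \le 0$, and since $\mu(t)^{-n/2} > 0$ and $\mu(t)^{n/2} = e^{-nt/2}$, this says exactly that $\frac{d}{dt}\bigl(e^{-nt/2}\int (P_tv)^2 e^{-f/\gamma(t)}\dVol_g\bigr) \le 0$.

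Next I would integrate this inequality over $[0,t]$. The value of $e^{-ns/2}\int (P_sv)^2 e^{-f/\gamma(s)}\dVol_g$ at $s = 0$ is $\int v^2 e^{-f}\dVol_g$, because $\gamma(0) = \tfrac12(e^0+1) = 1$ and $P_0v = v$. Monotonicity therefore gives $e^{-nt/2}\int (P_tv)^2 e^{-f/\gamma(t)}\dVol_g \le \int v^2 e^{-f}\dVol_g$ for all $t \ge 0$. Multiplying by $e^{nt/2}$ and taking square roots produces precisely the claimed bound $\lVert P_tv\rVert_{L^2(e^{-f/(\frac12(e^t+1))}\dVol_g)} \le e^{nt/4}\lVert v\rVert_{L^2(e^{-f}\dVol_g)}$.

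I do not expect a genuine obstacle in this argument: the analytic work — integrability of all the relevant quantities, legitimacy of differentiating under the integral sign, and the Stokes-theorem step that underlies the derivative identity — has already been carried out in \cref{find-divergence-2}, \cref{use-divergence}, \cref{compute-derivative} and \cref{complicated-derivative}, whose hypotheses are in force here: the uniform bounds \cref{uniform-initial-bounds} on $v_0$, the integral bounds \cref{integral-bounds-f-derivs} (which hold on any gradient shrinking Ricci soliton), and the curvature condition \cref{nonnegative-bakry-emery} (satisfied with any $k \ge 0$ since $\Ric(g) + \Hess(f) = \tfrac12 g \ge 0$). The only real content of this proof is the choice of the triple $(\alpha, \gamma, \mu)$, and that choice is forced by the structure of \cref{complicated-derivative} together with the nonnegativity of the normalized potential $f$; everything else is bookkeeping.
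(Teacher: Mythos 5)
Your proof is correct and is essentially identical to the paper's: the same choice $\alpha=1$, $\gamma(t)=\tfrac12(e^t+1)$, $\mu(t)=e^{-t}$ in \cref{complicated-derivative}, the same verification that both coefficients vanish, and then integration of the resulting monotonicity from the initial value $\int v^2e^{-f}\dVol_g$ at $t=0$. Nothing further is needed.
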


\begin{proof}
  We will show that
  \[
    \mu(t)^{-n/2}
    \frac{d}{dt}
    \left(
      \mu(t)^{n/2}\int (P_tv)^2e^{-\frac{f}{\gamma(t)}}\dVol_g
    \right)
  \]
  is nonpositive, for
  \begin{align*}
    \gamma(t)&:= \frac{e^t+1}{2},\\
    \mu(t)&:=e^{-t},\\
    \alpha&:=1.
  \end{align*}

  By \cref{complicated-derivative}, it suffices to show that 
  \begin{align}
    \int_M
    \left\{
      \left[
    \frac{1}{2}(1-\alpha)^2
    +\gamma^{-2}\left(\frac{1}{2}-\gamma+\dot\gamma\right)
    \right]f
    +\left(\alpha+\frac{\dot\mu}{\mu}\right)\frac{n}{2}
  \right\}
    (P_tv)^2e^{-\frac{f}{\gamma}}
    \dVol_g\label{deriv-optimal-pseudo-gaussian}
    \end{align}
  is nonpositive.

Indeed, we have,
\begin{align*}
  \frac{1}{2}(1-\alpha)^2
  +\gamma^{-2}\left(\frac{1}{2}-\gamma+\dot\gamma\right)
  &=  \frac{1}{2}(1-1)^2
+\left( \frac{e^t+1}{2}\right)^{-2}\left(\frac{1}{2}- \frac{e^t+1}{2}+ \frac{e^t}{2}\right)\\
&=0+0\\
&=0,\\
\alpha+\frac{\dot\mu}{\mu}&=1+\frac{-e^{-t}}{e^{-t}}\\
&=0.
\end{align*}
So \cref{deriv-optimal-pseudo-gaussian} is zero and hence nonpositive.
  
    \end{proof}

Finally, we extend to general initial data by an approximation argument.

\begin{proof}[Proof of \cref{main-result}]
  The $C^2(M)$ functions satisfying the bounds \cref{uniform-initial-bounds} form a dense subspace of $L^2(e^{-f}M)$,
  and by \cref{optimal-pseudo-gaussian} the linear map $P_t$ is bounded (by $e^{nt/r}$) on this subspace.
  Therefore it has a bounded (by $e^{nt/r}$) extension to $L^2(e^{-f}\dVol_g)$;
  call this extension $Q$. 

  We argue that for any $v\in L^2(e^{-f}\dVol_g)$,
  it holds pointwise almost everywhere that $Qv=P_tv$,
  so that we can elide the distinction and simply speak of
  \[
    P_t:  L^2(e^{-f}\dVol_g)
    \to L^2(e^{-f}\dVol_g) \cap 
    L^2\left(e^{-\frac{f}{\frac{1}{2}(e^t+1)}}\dVol_g\right).
  \]

  Indeed, $v$ is approximated in $L^2(e^{-f}\dVol_g)$ by some sequence
  $(v_i)$ of $C^2(M)$ functions satisfying the bounds \cref{uniform-initial-bounds}.
  So $\left(P_t(v_i)\right)=\left(Q(v_i)\right)$ converges
  in $L^2(e^{-f}\dVol_g)$ to $P_t(v)$
  and in $L^2\left(e^{-\frac{f}{\frac{1}{2}(e^t+1)}}\dVol_g\right)$ to $Q(v)$,
  hence pointwise almost everywhere to both.
\end{proof}

\section{The heat equation along the Ricci flow}

\subsection{Transferring the estimate to a bound along the Ricci flow}\label{sec:transfer-bound}

In this section we prove \cref{nonincreasing-along-flow},
the statement of how estimate transfers to the Ricci flow starting from the soliton.

\begin{proof}[Proof of \cref{nonincreasing-along-flow}]
  Note that the relationship in the opposite direction to \cref{ricci-heat-from-drift-heat} is 
\[
  v(t,y)=u(-e^{-t},\psi_{-t}(y)).
\]

We have,
\begin{align*}
e^{-nt/2}\int v_t{}^2e^{-\frac{f}{(e^t+1)/2}}\dVol_g
&=
e^{-nt/2}\int u_{-e^{-t}}(\psi_{-t}(x)){}^2e^{-\frac{f}{(e^t+1)/2}}\dVol_g\\
&=
\int u_{-e^{-t}}(y){}^2e^{-\frac{f\circ \psi_t}{(e^t+1)/2}}\dVol_{e^{-t}\psi_t{}^*g}\\
&=
\int u_{-e^{-t}}(y){}^2e^{-\frac{e^{-t}f\circ \psi_t}{(1+e^{-t})/2}}\dVol_{e^{-t}\psi_t{}^*g}.
\end{align*}
Thus the statement follows from \cref{main-result}.
\end{proof}

\subsection{Example: Euclidean space}

In this section we illustrate \cref{nonincreasing-along-flow}, the application to the heat equation along the Ricci flow of a soliton,
by giving an elementary proof in the case of $\mathbb{R}^n$ with the classical heat equation.
We exploit a well-known criterion for the boundedness of an operator defined by a kernel:

\begin{theorem}[Schur's test]\label{kernels}
Let $(X,\mu)$, $(Y,\nu)$ be locally compact Hausdorff spaces equipped with positive Borel measures, such that $L^2(X,\mu)$, $L^2(Y,\nu)$ are separable Hilbert spaces, and $K:X\times Y\to\mathbb{R}$ a measurable function.
Suppose that there exists constants $C_X$, $C_Y$, and nonnegative measurable functions $h_X:X\to\Rr$, $h_Y:Y\to\Rr$, such that
\begin{eqnarray*}
\int_Y|K(x,y)|h_Y(y)d\nu_y&\leq C_Xh_X(x),&\quad \text{for almost all }x;\\
\int_X|K(x,y)|h_X(x)d\mu_x&\leq C_Yh_Y(y),&\quad \text{for almost all }y.
\end{eqnarray*}
Define an operator $T: L^2(Y,\nu)\to L^2(X,\mu)$
formally by,
\[
(Tf)(x)=\int_YK(x,y)f(y)d\nu_y;
\]
then the operator $T$ is bounded.
\end{theorem}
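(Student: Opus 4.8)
The plan is to run the classical weighted Cauchy--Schwarz argument, which will produce the bound $\lVert T\rVert \le \sqrt{C_X C_Y}$. Throughout I would use that $h_X$ is positive $\mu$-almost everywhere and $h_Y$ is positive $\nu$-almost everywhere; this is the form in which Schur's test is applied and the only case needed below, and it is what makes the weight-splitting legitimate.

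First, for a fixed $f \in L^2(Y,\nu)$ and for $\mu$-almost every $x$, I would factor the integrand defining $(Tf)(x)$ as
\[
  |K(x,y)|\,|f(y)|
  = \bigl(|K(x,y)|^{1/2} h_Y(y)^{1/2}\bigr)\bigl(|K(x,y)|^{1/2} h_Y(y)^{-1/2} |f(y)|\bigr)
\]
and apply the Cauchy--Schwarz inequality in $L^2(Y,\nu)$ to the two factors. The square of the first factor integrates to $\int_Y |K(x,y)| h_Y(y)\, d\nu_y \le C_X h_X(x)$ by the first hypothesis, so
\[
  |(Tf)(x)|^2 \le C_X\, h_X(x) \int_Y |K(x,y)|\, h_Y(y)^{-1} |f(y)|^2\, d\nu_y .
\]

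Next I would integrate this inequality in $x$ against $d\mu_x$ over $X$, invoke Tonelli's theorem to interchange the order of integration (the integrand is nonnegative and jointly measurable, so no integrability hypothesis is needed for the interchange), and then apply the second hypothesis in the form $\int_X |K(x,y)| h_X(x)\, d\mu_x \le C_Y h_Y(y)$. The factors $h_Y(y)^{-1}$ and $h_Y(y)$ cancel, leaving
\[
  \lVert Tf\rVert_{L^2(X,\mu)}^2
  \le C_X C_Y \int_Y |f(y)|^2\, d\nu_y
  = C_X C_Y\, \lVert f\rVert_{L^2(Y,\nu)}^2 .
\]
This chain also shows that $\int_Y |K(x,y)|\,|f(y)|\, d\nu_y < \infty$ for $\mu$-almost every $x$, so $(Tf)(x)$ is well-defined $\mu$-a.e.; hence $T$ is a genuine bounded operator $L^2(Y,\nu) \to L^2(X,\mu)$ with $\lVert T\rVert \le \sqrt{C_X C_Y}$.

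I do not expect any genuine obstacle: the computation is entirely routine. The only points requiring a little attention are the use of positivity of the weights (so that $h_Y^{-1/2}$ in the splitting makes sense) and the measure-theoretic bookkeeping --- joint measurability of the various integrands and the legitimacy of the Tonelli interchange --- for which the stated assumptions that $(X,\mu)$, $(Y,\nu)$ are locally compact Hausdorff spaces with positive Borel measures and separable $L^2$-spaces are more than enough.
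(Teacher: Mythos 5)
The paper does not actually prove this theorem: Schur's test is quoted as a classical black box and used only in the proof of \cref{heat-operator-euclidean}, so there is no in-paper argument to compare yours against. Your proof is the standard one --- split $|K(x,y)|\,|f(y)|$ as $\bigl(|K|^{1/2}h_Y^{1/2}\bigr)\bigl(|K|^{1/2}h_Y^{-1/2}|f|\bigr)$, apply Cauchy--Schwarz in $y$, integrate in $x$, and interchange by Tonelli --- and it is correct, yielding the explicit bound $\lVert T\rVert\le\sqrt{C_XC_Y}$ together with the a.e.\ absolute convergence of the defining integral. You are also right to insist on almost-everywhere positivity of $h_Y$: as literally stated, with merely nonnegative weights, the theorem is false (take $h_X\equiv h_Y\equiv 0$, so both displayed hypotheses hold vacuously for an arbitrary kernel), so the positivity you assume is a necessary repair rather than a convenience; it holds in the paper's application, where $h_X$ and $h_Y$ are exponentials. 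The only remaining bookkeeping point, which you gesture at but do not spell out, is that Tonelli requires $\sigma$-finiteness (or some substitute), which the stated hypotheses on $(X,\mu)$ and $(Y,\nu)$ are intended to supply and which is immediate for the Gaussian-weighted Lebesgue measures actually used.
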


\begin{proposition} \label{heat-operator-euclidean}
  For each function $u_0\in L^2(\Rr^n, e^{-|\x|^2/4}d\x)$, there exists a solution $u:[-1,1)\times\Rr^n$ to the heat equation with initial condition $u(0,\cdot)=u_0$ and with $u(\tau,\cdot)\in L^2\left(\Rr^n, e^{-\frac{|\x|^2}{2\left(1-\tau\right)}}d\x\right)$.
  
  Moreover the heat evolution operator $u_0\mapsto u(\tau,\cdot)$ is bounded as a linear operator from $L^2(\Rr^n, e^{-|\x|^2/4}d\x)$ to $L^2\left(\Rr^n, e^{-\frac{|\x|^2}{2\left(1-\tau\right)}}d\x\right)$.
\end{proposition}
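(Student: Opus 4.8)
The plan is to use the explicit heat kernel on $\mathbb{R}^n$ together with Schur's test (\cref{kernels}) to verify boundedness directly. Recall that the solution to the heat equation with initial data $u_0$ at time $\tau=0$ is given for $\tau > 0$ by convolution against the heat kernel, $u(\tau, \x) = (4\pi\tau)^{-n/2}\int_{\mathbb{R}^n} e^{-|\x-\y|^2/(4\tau)}u_0(\y)\, d\y$, and that this formula in fact makes sense for all $\tau \in (-1, 1)$ provided $u_0$ lies in the weighted space $L^2(\mathbb{R}^n, e^{-|\y|^2/4}\,d\y)$: the Gaussian weight $e^{-|\y|^2/4}$ decays fast enough that, even when $\tau<0$ forces the ``kernel'' $e^{-|\x-\y|^2/(4\tau)} = e^{|\x-\y|^2/(4|\tau|)}$ to grow in $\y$, the product $e^{-|\y|^2/4}e^{|\x-\y|^2/(4|\tau|)}$ is still integrable in $\y$ exactly when $\frac{1}{4|\tau|} < \frac{1}{4}$, i.e.\ $|\tau| < 1$. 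This is the reason the interval is $[-1,1)$ and not just $[0,1)$. One should first record that $u$ so defined is smooth on $(-1,1)\times\mathbb{R}^n$ and solves the heat equation (differentiation under the integral sign, justified by the same Gaussian domination), and that $u(0,\cdot) = u_0$ in the appropriate limiting sense.

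Next I would set up Schur's test. Take $X = Y = \mathbb{R}^n$, with $d\nu_\y = e^{-|\y|^2/4}\,d\y$ the source measure and $d\mu_\x = e^{-|\x|^2/(2(1-\tau))}\,d\x$ the target measure. The operator $T = T_\tau$ acts by $(Tf)(\x) = \int_{\mathbb{R}^n} K(\x,\y) f(\y)\,d\nu_\y$ where, matching the convolution formula against the measure $d\nu$, one must set
\[
K(\x,\y) = (4\pi\tau)^{-n/2} e^{-|\x-\y|^2/(4\tau)} e^{|\y|^2/4},
\]
so that $K(\x,\y)\,d\nu_\y = (4\pi\tau)^{-n/2} e^{-|\x-\y|^2/(4\tau)}\,d\y$ reproduces the heat semigroup (and for $\tau<0$ one reads $(4\pi\tau)^{-n/2}$ and $e^{-|\x-\y|^2/(4\tau)}$ with the obvious sign conventions, i.e.\ as $(4\pi|\tau|)^{-n/2}e^{|\x-\y|^2/(4|\tau|)}$ up to the harmless constant phase which does not affect $|K|$). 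The heart of the matter is then to produce the two weight functions $h_X, h_Y$ and constants $C_X, C_Y$. Given the Gaussian nature of everything, the natural ansatz is to take $h_X$ and $h_Y$ to be Gaussians $e^{-a|\x|^2}$, $e^{-b|\y|^2}$ for suitable $a, b \geq 0$ (quite possibly $a = b = 0$ works, but it is cleaner to keep them as free parameters); then each of the two Schur integrals is a Gaussian integral in $\y$ (resp.\ $\x$) which can be evaluated in closed form, and the inequalities $\int |K|h_Y\,d\nu \leq C_X h_X$, $\int |K|h_X\,d\mu \leq C_Y h_Y$ become algebraic conditions on the exponents. One checks that for $\tau \in (-1,1)$ the relevant quadratic forms in the exponent are negative definite — this is again precisely the condition $|\tau| < 1$ — so the Gaussian integrals converge and, after completing the square, the resulting $|\x|^2$-coefficient (resp.\ $|\y|^2$-coefficient) can be matched to $-a$ (resp.\ $-b$) with a finite multiplicative constant.

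The main obstacle — really the only non-routine point — is the bookkeeping of the quadratic forms: one must verify that after integrating $|K(\x,\y)| h_Y(\y) e^{-|\y|^2/4}$ over $\y$, the Gaussian exponent in the surviving $\x$-dependence is no worse than $-a|\x|^2$ (matching $h_X$), and symmetrically for the other integral against $d\mu_\x = e^{-|\x|^2/(2(1-\tau))}\,d\x$; here the asymmetry between the source weight exponent $\tfrac14$ and the target weight exponent $\tfrac{1}{2(1-\tau)}$ must be handled carefully, and it is exactly the factor $(1-\tau)$ in the target weight (which is what \cref{nonincreasing-along-flow} predicts, via the identification $\gamma = (1-\tau)/2$ up to the $(-\tau)$-rescaling) that makes the matching possible. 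I would organize this by first treating $\tau \geq 0$, where the kernel is a genuine decaying Gaussian and the estimate is essentially the hypercontractivity/semigroup bound in disguise, and then $\tau \in (-1, 0)$, where the kernel grows but the source weight still dominates; in both regimes the computation is a one-variable (in $|\x|^2$, $|\y|^2$) completion of the square. Once the two Schur inequalities hold with finite $C_X, C_Y$, \cref{kernels} immediately gives that $T_\tau: L^2(\mathbb{R}^n, e^{-|\x|^2/4}\,d\x) \to L^2(\mathbb{R}^n, e^{-|\x|^2/(2(1-\tau))}\,d\x)$ is bounded, which is the claim; boundedness of the solution in the target space for each fixed $\tau$ follows a fortiori.
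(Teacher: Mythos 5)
Your overall strategy --- Schur's test with Gaussian test functions against the explicit heat kernel --- is exactly the paper's, and for the forward-in-time part the completion-of-square bookkeeping you describe is precisely what the paper carries out (with the explicit choices $h_Y(\y)=e^{|\y|^2/8}$, $h_X(\x)=e^{|\x|^2/(4(1-\tau))}$, under which both Schur integrals reduce to the single Gaussian $\exp\bigl(-\tfrac{1-\tau}{8(\tau+1)}\bigl\lvert\y-\tfrac{2}{1-\tau}\x\bigr\rvert^2\bigr)$). But there is a genuine gap in your treatment of $\tau<0$, and it comes from taking the initial time to be $0$. The paper's kernel is $[4\pi(\tau+1)]^{-n/2}\exp\bigl(-|\x-\y|^2/(4(\tau+1))\bigr)$: the elapsed time is $\tau+1$, i.e.\ the data is posed at $\tau=-1$ and the evolution is \emph{forward} in time on all of $[-1,1)$ (the ``$u(0,\cdot)=u_0$'' in the statement is evidently a slip for ``$u(-1,\cdot)=u_0$''; this is also the reading forced by \cref{nonincreasing-along-flow}). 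The condition $\tau\in(-1,1)$ then arises because the Gaussian $\exp\bigl(-\tfrac{1-\tau}{8(\tau+1)}\lvert\cdot\rvert^2\bigr)$ is integrable exactly when $\tfrac{1-\tau}{\tau+1}>0$.

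Your version instead requires solving the heat equation \emph{backward} from $\tau=0$ on $[-1,0)$, and this part of your argument fails. First, the algebra is reversed: $\tfrac{1}{4|\tau|}<\tfrac14$ is equivalent to $|\tau|>1$, not $|\tau|<1$, so the product $e^{-|\y|^2/4}e^{|\x-\y|^2/(4|\tau|)}$ is \emph{not} integrable for $0<|\tau|<1$. Second, even that is not the relevant integrand: the operator is applied to a general $f$ with $fe^{-|\y|^2/8}\in L^2(d\y)$, so $f$ may grow like $e^{|\y|^2/8}$, and $\int e^{|\x-\y|^2/(4|\tau|)}f(\y)\,d\y$ diverges for every $\x$ and every $\tau<0$ (take $f(\y)=e^{|\y|^2/8}(1+|\y|)^{-n}$). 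More structurally, the backward problem is ill-posed on this space: the forward map from time $-1$ to time $0$ lands in the strictly smaller space $L^2(e^{-|\x|^2/2}d\x)$, so a generic $u_0\in L^2(e^{-|\x|^2/4}d\x)$ admits no backward extension at all. The fix is simply to pose the data at $\tau=-1$ and run your computation with elapsed time $\tau+1$ throughout; then your Schur argument goes through and coincides with the paper's.
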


\begin{proof}
The kernel of the heat evolution operator, as a map between these function spaces, is
  \[
    K(\x,\y)=[4\pi(\tau+1)]^{-n/2}\exp\left(\frac{|\y|^2}{4}-\frac{|\x-\y|^2}{4(\tau+1)}\right);
  \]
that is, it differs from the standard heat kernel by a factor of $\exp\left(|\y|^2/4\right)$.

We set up for the use of Schur's test (Theorem \ref{kernels}) by writing
\begin{eqnarray*}
h_Y(\y)=\exp(|\y|^2/8),&&
h_X(\x)=\exp(|\x|^2/4\left(1-\tau\right)).\\
d\nu_\y=\exp\left(-|\y|^2/4\right)d\y,&&
d\mu_\x=\exp\left(-|\x|^2/2\left(1-\tau\right)\right)d\x.
\end{eqnarray*}

The algebraic identity
\[
\frac{2}{\tau+1}|\x-\y|^2-|\y|^2
=\frac{1-\tau}{\tau+1}\left\lvert \y-\tfrac{2}{1-\tau}\x\right\rvert^2 -\frac{2}{1-\tau}|\x|^2
\]
is easily checked. Using it twice,
\begin{eqnarray*}
K(\x,\y)h_Y(\y)d\nu_\y
&=&[4\pi (\tau+1)]^{-n/2}\exp\left(
-\frac{|\x-\y|^2}{4(\tau+1)}+\frac{|\y|^2}{8}
\right)
d\y\\
&=&[4\pi (\tau+1)]^{-n/2}\exp\left(
-\frac{1-\tau}{8(\tau+1)}\left\lvert \y-\frac{2}{1-\tau}\x\right\rvert^2 +\frac{1}{4(1-\tau)}|\x|^2
\right)d\y\\
&=&C\ h_X(\x)\exp\left(-
\frac{1-\tau}{8(\tau+1)}\left\lvert \y-\frac{2}{1-\tau}\x\right\rvert^2
\right)d\y.\\
K(\x,\y)h_X(\x)d\mu_\x
&=&\ [4\pi(\tau+1)]^{-n/2}
\exp\left(
\frac{|\y|^2}{4}-\frac{|\x-\y|^2}{4(\tau+1)}
-\frac{|\x|^2}{4\left(1-\tau\right)}
\right)d\x\\
&=&\ [4\pi(\tau+1)]^{-n/2}
\exp\left(
\frac{|\y|^2}{8}
-\frac{1-\tau}{8(\tau+1)}\left\lvert \y-\frac{2}{1-\tau}\x\right\rvert^2
\right)d\x\\
&=&\ C\ h_Y(\y)\exp\left(
-\frac{1-\tau}{8(\tau+1)}\left\lvert \y-\frac{2}{1-\tau}\x\right\rvert^2
\right)d\x
\end{eqnarray*}
Integrating,
\begin{eqnarray*}
\int_Y|K(x,y)|h_Y(y)d\nu_y&\leq C_Xh_X(x),&\quad\text{for all }\x\in\Rr^n\\
\int_X|K(x,y)|h_X(x)d\mu_x&\leq C_Yh_Y(y),&\quad\text{for all }\y\in\Rr^n.
\end{eqnarray*}
The result follows by Theorem \ref{kernels}.
\end{proof}


\bibliographystyle{alpha}
{\small\bibliography{analysis}{}}

\end{document}